\documentclass[10pt,a4paper]{article}
\usepackage[utf8]{inputenc}
\usepackage{amsfonts,amsmath,amsthm,amscd,amssymb,latexsym}
\usepackage[margin=1.2in]{geometry}
\usepackage{tikz}
\usepackage{tabularx,ragged2e,booktabs,caption}
\usepackage{lscape} 
\usepackage{makeidx}				
\usepackage{enumerate}               
\usepackage{titlesec}
\usepackage{color}
\usepackage{makeidx}				
\usepackage{appendix} 
\usepackage{longtable}
\usepackage{mathtools}
\usepackage{times}

\DeclareMathOperator{\Aut}{Aut}

\DeclareMathOperator{\End}{End}
\DeclareMathOperator{\Hull}{Hull}

\DeclareMathOperator{\Gr}{\text{Gr}}

\DeclarePairedDelimiter\ceil{\lceil}{\rceil}
\DeclarePairedDelimiter\floor{\lfloor}{\rfloor}

\setcounter{tocdepth}{1}		

\newcommand{\sq}{\mathbin{\square}}
%
%


\theoremstyle{plain}
\newtheorem{theorem}{Theorem}[section] 
\newtheorem{lemma}[theorem]{Lemma}
\newtheorem{problem}[theorem]{Problem}
\newtheorem{proposition}[theorem]{Proposition}
\newtheorem{corollary}[theorem]{Corollary}
\newtheorem{definition}[theorem]{Definition}
\newtheorem{conjecture}{Conjecture}
\theoremstyle{definition}
\newtheorem{example}[theorem]{Example}
\newtheorem{remark}[theorem]{Remark}

\begin{document}

\title{Generating Sets of the Kernel Graph and the Inverse Problem in Synchronization Theory}
\author{Artur Schaefer\\
  {\small Mathematical Institute, University of St Andrews}\\
  {\small North Haugh, St Andrews KY16 9SS, UK}\\
  {\small  as305@st-andrews.ac.uk}}
\date{}
\maketitle
\begin{abstract}
 
This paper analyses the construction of the kernel graph of a non-synchronizing transformation semigroup and introduces the inverse synchronization problem. Given a transformation semigroup $S\leq T_n$, we construct the kernel graph $\text{Gr}(S)$ by saying $v$ and $w$ are adjacent, if there is no $f\in S$ with $vf=wf$. The kernel graph is trivial or complete if the semigroup is a synchronizing semigroup or a permutation group, respectively. The connection between graphs and synchronizing (semi-) groups was established by Cameron and Kazanidis \cite{pjc08}, and it has led to many results regarding the classification of synchronizing permutation groups, and the description of singular endomorphims of graphs. This paper, firstly, emphasises the importance of this construction mainly by proving its superior structure, secondly, analyses the construction and discusses minimal generating sets and their combinatorial properties, and thirdly, introduces the inverse synchronization problem. The third part also includes an additional characterization of primitive groups.
\end{abstract}

\section{Introduction}
%

The motivation for this research comes from Cameron and Kazanidis \cite{pjc08} who provided a revolutionary approach to tackle the synchronization problem. Synchronization theory is the study of transformation semigroups $S$ admitting the synchronization property; a transformation groups is synchronizing, if it contains a transformation (or map) of rank $1$ (size of its image). However, a permutation group $G$ is synchronizing, if for every singular transformation $t$ the semigroup $S=\langle G,t\rangle$ is synchronizing. So, the synchronization problem is to classify all synchronizing permutation groups; however, a secondary problem is to find all tuples $(G,t)$ such that the corresponding semigroup is synchronizing.

If a group is not synchronizing, then there is a map $t$ of minimal rank which is not synchronized by $G$. Maps of minimal ranks correspond to section-regular partitions, as shown by Neumann \cite{neumann}. Moreover, such partitions are uniform and pose an interesting combinatorial object by themselves. However, Cameron and Kazanidis introduced a graph theoretical approach towards this problem. They showed that if there is a map not synchronized, then there is a graph admitting this map as an endomorphism. This is described in more detail in the next theorem.

\begin{theorem}[Thm 2.4 \cite{pjc08}]\label{thm1}
 A group $G$ does not synchronize a transformation $t$, if and only if there is non-trivial graph $X$ with complete core such that $\langle G,t\rangle\leq \End(X)$. 
\end{theorem}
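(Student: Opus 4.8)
The plan is to prove both implications, building the required graph from the kernel graph $\Gr(S)$ of $S=\langle G,t\rangle$ for the forward (harder) direction.

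For the easy direction ($\Leftarrow$), suppose $X$ is non-trivial with complete core and $\langle G,t\rangle\le\End(X)$. Since $X$ is non-trivial it has at least one edge $\{u,v\}$. Any endomorphism $\phi$ sends this edge to an edge, so $u\phi\ne v\phi$; hence no element of $\End(X)$, and in particular no element of $\langle G,t\rangle$, can be a constant (rank-$1$) map. Thus $\langle G,t\rangle$ is non-synchronizing, i.e. $G$ does not synchronize $t$. Note that the complete-core hypothesis is not needed here; it is precisely the extra structure that the forward direction will have to manufacture.

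For the forward direction ($\Rightarrow$), assume $G$ does not synchronize $t$, set $S=\langle G,t\rangle$, and let $r$ be the minimal rank attained in $S$; since $S$ has no rank-$1$ map, $r\ge 2$. Fix $f\in S$ with $\rank(f)=r$ and put $B=\im(f)$. I would take $X=\Gr(S)$ as defined in the abstract and verify four things. First, every $s\in S$ is an endomorphism of $X$: if $u,v$ are adjacent (no $g\in S$ identifies them) then in particular $us\ne vs$, and any $g\in S$ with $(us)g=(vs)g$ would give $u(sg)=v(sg)$ with $sg\in S$, a contradiction; hence $us,vs$ are adjacent and $\langle G,t\rangle\le\End(X)$. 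Second, $B$ is a clique of size $r$: if some $g\in S$ satisfied $ag=bg$ for distinct $a,b\in B$, then $fg\in S$ would have rank $|g(B)|<r$, contradicting minimality, so no $g$ identifies two points of $B$ and any two points of $B$ are adjacent. Third, $X$ is not complete, because $t$ is singular and any pair collapsed by $t$ is a non-edge; together with the edge inside $B$ this makes $X$ non-trivial.

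The crux is the complete-core property, and the clean point is that the single map $f$ does double duty. On one hand $B=\im(f)$ is a clique, giving $\omega(X)\ge r$. On the other hand the assignment $v\mapsto vf\in B$ is a proper $r$-colouring: if $u,v$ are adjacent they are identified by no element of $S$, in particular not by $f$, so $uf\ne vf$; hence $\chi(X)\le r$. Combining, $r\le\omega(X)\le\chi(X)\le r$, so $\omega(X)=\chi(X)=r$. Since a graph has complete core precisely when its clique number equals its chromatic number (the clique on $B$ together with the colouring $v\mapsto vf$ then realise the complete graph on $B$ as a retract), the core of $X$ is $K_r$. I expect the main obstacle to be spotting and justifying this dual role of the minimal-rank element: producing the clique is routine, but getting $\chi(X)=\omega(X)$ is where a less canonical choice of graph would stall, whereas the kernel graph makes both the clique and the colouring fall out of the same $f$.
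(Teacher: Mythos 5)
Your proof is correct and takes essentially the same route as the paper: the paper does not reprove Theorem \ref{thm1} (it cites \cite{pjc08}), but its Section \ref{section2} lemma on the kernel graph --- $S\leq \End(\Gr(S))$ and clique number equals chromatic number, proved by picking a minimal-rank element whose image is a clique and which simultaneously serves as a colouring --- is exactly your argument. The ``dual role'' of $f$ that you single out as the crux is precisely the observation in that lemma's proof.
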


Using this theorem the study of synchronizing groups translates into the study of graph endomorphisms, and many groups were shown to be non-synchronizing that way. Moreover, this theorem reignited the study of graph endomorphisms (which was possibly motivated by their application to synchronization theory \cite{godsil11,huang14,huang15,schaeferhamminggraph}.

However, the proof of this theorem makes use a special graph construction which guarantees a complete core, that is the equality of clique and chromatic number. The kernel graph $\Gr(S)$ of a transformation semigroup $S$ on $n$ points has vertex set $\{1,...,n\}$, where two vertices are adjacent if their is no transformation in $S$ collapsing these two vertices. If $S$ is the endomorphism monoid of a graph $X$, then $\Gr(S)$ is called the hull of $X$.  

The graph kernel graph forms the link between synchronization theory and graph endomorphisms (for graphs with complete core); in particularly, synchronization theory of permutation groups can be seen as the study of endomorphisms of non-trivial hulls. Both areas are of special interest to many mathematicians, and thus, the purpose of this research is to identify the basic properties of the kernel graph and emphasise its unique features.

This research is divided into $7$ sections. Section \ref{section2} is introducing the construction of $\Gr(S)$ and the term ``hull'' of a graph $X$ (written $\Hull(X)$), which uses $S=\End(X)$. In particularly, we call a graph a hull if it is its own hull. We provide basic properties and well known results on these graphs, and introduce the derived graph $\Gr'(S)$. Then, Section \ref{section3} is going to discuss briefly the effect of $k$-colourings provided by this construction through the clique number, and compare it with other types of colourings. It turns out that the $k$-colourings are the superior choice of colourings, and that hulls are the superior graphs regarding endomorphisms. Afterwards, in Section \ref{section4}, we present examples of both well known and less known graphs which are hulls and examples which are not hulls. Here, we show that one feature of rank $3$ graphs admitting singular endomorphisms is that they are hulls. In Section \ref{section5}, we are going to discuss generating sets of hulls, which turn out to contain interesting combinatorial properties. As a result, we will see that the idempotent transformations of minimal rank are sufficient to generate the hull. In Section \ref{section6} we will introduce the inverse synchronization problem, and provide a new characterization of primitive permutation groups. Finally, Section \ref{section7} provides a set of open problems.

%

\section{The Construction of the Kernel Graph and the Hull}\label{section2}

 In this section, we provide the construction of the kernel graph $\Gr(S)$ for a semigroup $S$ and define the term \textit{hull} for a graph $X$ (initially given in \cite{pjc08}). For a transformation semigroup $S\leq T_n$ the graph $\Gr(S)$ whose vertex set is $\{1,...,n\}$ where two vertices $v$ and $w$ are adjacent, if there is no $f\in S$ with $vf=wf$, is called the \emph{kernel graph}. This graph is admitting the following basic properties.
 
 \begin{lemma}
 Let $\Gamma=\Gr(S)$. Then, the following holds,
  \begin{enumerate}
   \item $S\leq \End(\Gamma)$,
   \item $\Gamma$ has clique number equal to its chromatic number,
   \item if $S$ is synchronizing, then $\Gamma $ is the null graph, and
   \item if $S$ is a permutation group, then $\Gamma$ is the complete graph.
  \end{enumerate}
 \end{lemma}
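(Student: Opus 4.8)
The plan is to verify each of the four claims directly from the definition of the kernel graph $\Gr(S)$, where vertices $v,w$ are adjacent precisely when no $f \in S$ satisfies $vf = wf$. Throughout I would keep in mind that an endomorphism of a graph $\Gamma$ is a map preserving adjacency, i.e.\ $v \sim w \Rightarrow vf \sim wf$ for every edge.

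For item (1), I would take an arbitrary $f \in S$ and an arbitrary edge $\{v,w\}$ of $\Gamma$, and show $\{vf, wf\}$ is again an edge. Since $v \sim w$, by definition no $g \in S$ collapses $v$ and $w$; in particular $vf \neq wf$, so the images are distinct vertices. To see they are adjacent, suppose for contradiction some $h \in S$ satisfied $(vf)h = (wf)h$; then $fh \in S$ (as $S$ is closed under composition) would collapse $v$ and $w$, contradicting $v \sim w$. Hence $vf \sim wf$, so $f \in \End(\Gamma)$. For item (2), I would simply invoke the surrounding context: the construction is designed so that clique number equals chromatic number, and the natural argument is that a maximum clique and a colouring by the kernel classes of a minimum-rank map give matching bounds. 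Concretely, any $f \in S$ of rank $r$ partitions $\{1,\dots,n\}$ into $r$ kernel classes; vertices in the same class are collapsed by $f$ and hence nonadjacent, so these classes form a proper colouring with $r$ colours, giving $\chi(\Gamma) \le r$. Conversely a clique of size $k$ forces $k \le \omega(\Gamma) \le \chi(\Gamma)$, and one exhibits a clique of size equal to the minimal such $r$; the sandwiching yields $\omega(\Gamma) = \chi(\Gamma)$.

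Items (3) and (4) are then quick special cases. If $S$ is synchronizing, it contains a constant map $c$ of rank $1$, so for every pair $v,w$ we have $vc = wc$; thus no pair is adjacent and $\Gamma$ is the null (edgeless) graph. If $S$ is a permutation group, then every $f \in S$ is a bijection, so $vf = wf$ forces $v = w$; hence for distinct $v,w$ no element collapses them, making every pair adjacent, so $\Gamma$ is complete.

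The main obstacle, and the only step requiring genuine care, is item (2): establishing the equality of clique and chromatic number rather than the trivial inequality $\omega \le \chi$. The delicate point is producing a clique whose size matches the colouring bound coming from a minimum-rank map. I would expect the cleanest route to be choosing $f \in S$ of minimal rank $r$, using its kernel classes as an $r$-colouring (so $\chi \le r$), and then arguing that a transversal chosen appropriately—one point from each class, or the image set $\{1,\dots,n\}f$ itself—forms a clique of size $r$: if two image points $vf, wf$ were nonadjacent, some $g$ would collapse them, whence $fg$ would have rank strictly less than $r$, contradicting minimality. This simultaneously forces $\omega \ge r \ge \chi \ge \omega$, closing the loop.
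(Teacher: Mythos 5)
Your proof is correct and takes essentially the same route as the paper: the paper dismisses items (1), (3) and (4) as trivial and establishes (2) exactly as you do, by picking a minimal-rank element $t$ of rank $r$, observing that minimality forces its image to be an $r$-clique, and noting that $t$ itself is then a homomorphism to $K_r$, i.e.\ an $r$-colouring. Your write-up just makes explicit the closure-under-composition argument that the paper leaves implicit.
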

 \begin{proof}
  Everything except for $2$ is trivial. So, pick an element $t$ of minimal rank $r$ in $S$. The image of $t$ is a clique of size $r$ in $\Gamma$, since $t$ is minimal. But this means $t$ is a homomorphism from $\Gamma$ to the complete graph $K_r$ which means $t$ is a colouring.
 \end{proof}
 
 Applying this construction to the endomorphism monoid of a graph $X$ brings us to the definition of a hull.
 
 \begin{definition}
  Let $X$ be a graph and $E=\End(X)$. The \emph{hull} of $X$, denoted by $\Hull(X)$, is the graph $\Gr(E)$. If $X=\Hull(X)$, then we call $X$ a hull.
 \end{definition}
 
 From this construction the following properties are obvious.
 
 \begin{lemma}
 Let $X$ be a graph and $Y=\Hull(X)$. Then,
  \begin{enumerate}
   \item $X$ is a spanning subgraph of $Y$,
   \item $Y$ has clique number equal to chromatic number,
   \item $\Aut(X)\leq \Aut(Y)$, and
   \item $\End(X)\leq \End(Y)$.
  \end{enumerate}
  And in particular,
  \begin{enumerate}
   \item the null graph is a hull,
  \item the complete graph is a hull, and
  \item $\Hull(X)=\Hull(\Hull(X))$.
  \end{enumerate}

 \end{lemma}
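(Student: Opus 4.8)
The plan is to reduce the four main assertions, as far as possible, to the preceding lemma on $\Gr(S)$ applied to the semigroup $S=\End(X)$, and then to settle the three ``in particular'' items with the same machinery. The one structural fact I would isolate at the outset is that $\Gr$ is order-reversing on subsemigroups of $T_n$: enlarging the collection of available maps can only create more collapsible pairs and hence can only delete edges, so if $S\leq S'$ then every edge of $\Gr(S')$ is an edge of $\Gr(S)$. With $E=\End(X)$ and $Y=\Gr(E)$, part (1) follows directly from the definition of an endomorphism: if $v$ and $w$ are adjacent in $X$, then no $f\in E$ satisfies $vf=wf$, since an endomorphism sends the edge $\{v,w\}$ to an edge and cannot collapse its endpoints; by the construction of $\Gr(E)$ this says exactly that $v$ and $w$ are adjacent in $Y$, and as both graphs carry the vertex set $\{1,\dots,n\}$ this makes $X$ a spanning subgraph of $Y$. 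Part (2) is immediate, since $Y=\Gr(E)$ is a kernel graph and the previous lemma already supplies equality of clique and chromatic number. Part (4) is the first item of the previous lemma with $S=E$, namely $E\leq\End(\Gr(E))=\End(Y)$; alternatively, if $v,w$ are non-collapsible by every element of $E$ and $f\in E$, then any $h\in E$ collapsing $vf,wf$ would make $fh\in E$ collapse $v,w$, a contradiction, so $f$ preserves the edges of $Y$.

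Part (3) needs slightly more, because an automorphism must also reflect non-edges. Given $g\in\Aut(X)$, both $g$ and $g^{-1}$ lie in $E$, and I would prove that $g$ respects adjacency in $Y$ in both directions by a conjugation argument: writing $v\sim_Y w$ for adjacency in $Y$, if some $h\in E$ collapses $vg$ and $wg$ then $gh\in E$ collapses $v$ and $w$, while conversely if $h\in E$ collapses $v$ and $w$ then $g^{-1}h\in E$ collapses $vg$ and $wg$. Thus $v\sim_Y w$ if and only if $vg\sim_Y wg$, so $g\in\Aut(Y)$.

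For the ``in particular'' items, the null graph has $\End=T_n$, which is synchronizing, so its hull is the null graph by the previous lemma, while the complete graph has $\End=S_n$, a permutation group, so its hull is complete, again by the previous lemma. The idempotence $\Hull(\Hull(X))=\Hull(X)$ is where the two tools combine, and it is the only step I expect to require genuine care. Setting $E'=\End(Y)$, part (4) gives $E\leq E'$, so the order-reversing remark yields $\Gr(E')\subseteq Y$; meanwhile part (1) applied to the graph $Y$ in place of $X$ gives the reverse containment $Y\subseteq\Gr(E')=\Hull(Y)$. Equating the two inclusions yields $\Hull(Y)=Y$, that is, $\Hull(\Hull(X))=\Hull(X)$. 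The main obstacle, such as it is, lies in keeping the directions straight: one must remember that $\Gr$ reverses order while the endomorphism monoid grows under the hull operation, so that the containment forced by $E\leq E'$ and the containment forced by part (1) point toward each other and genuinely meet.
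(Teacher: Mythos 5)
Your proof is correct, and it follows exactly the route the paper intends: the paper states these properties without proof as immediate consequences of the construction and of the preceding lemma on $\Gr(S)$, and your argument simply supplies the details (the order-reversal of $\Gr$, the conjugation argument for automorphisms, and the two opposing containments for idempotence), all of which check out.
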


 The hull turns out to be completing $X$ or at least adding extra symmetry to $X$, as we see from the last lemma. Moreover, $Y$ has a complete core (chromatic number and clique number are the same) which immediately provides an endomorphism for $Y$ by passing the complete graph 
 \[Y\rightarrow K_r \rightarrow Y.\]

 This construction gave rise to Godsil's and Royle's definition of pseudo-cores \cite{godsil11} which are graphs having either no singular endomorphisms or whose singular endomorphisms are all colourings. Their research covers the study of endomorphisms of geometric graphs, that is graphs coming from partial geometries. 

 However, the hull construction was extended to a graph called the derived graph of $\Gr(S)$ \cite{pjc13}. Let $X$ be a graph with clique number equal to chromatic number; then, the derived graph $Y$ is the graph with the same vertex set as $X$ and whose edges are the edges of $X$ which are contained in a maximal clique. In particular, for a given semigroup $S$ the derived graph of $\Gr(S)$ will be denoted by $\Gr'(S)$. The following properties can be found in \cite{araujo15}.
 
 \begin{lemma}
  \begin{enumerate}
   \item $S$ contains a map of rank $1$, if and only if $\Gr'(S)$ is the null graph.
   \item\label{endomorphismsderivedgraph} $S\leq \End(\Gr(S))\leq \End(\Gr'(S))$.
   \item $\Gr'(S)$ has a complete core, with clique and chromatic number the same as for $\Gr(S)$.
   \item Every edge of $\Gr'(S)$ is contained in a maximum clique.
  \end{enumerate}
 \end{lemma}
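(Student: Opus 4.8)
The plan is to write $X=\Gr(S)$ and $Y=\Gr'(S)$, let $\omega$ denote the common value of the clique and chromatic numbers of $X$ (equal by the first lemma of this section), and fix a map $t\in S$ of minimal rank, so that by that lemma $\rank(t)=\omega$, the image $\im(t)$ is a maximum clique of $X$, and $t$ is itself a proper $\omega$-colouring $X\to K_\omega$. I read ``contained in a maximal clique'' in the definition of $Y$ as ``contained in a maximum clique'' (one of size $\omega$); otherwise every edge qualifies and $Y=X$. Thus the edges of $Y$ are exactly the edges of $X$ that lie in some $\omega$-clique, and the whole statement will follow from the inclusion $Y\subseteq X$, the equality $\omega(X)=\chi(X)=\omega$, and one structural observation proved first.

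That observation is: in any graph whose clique number equals its chromatic number, every endomorphism sends maximum cliques to maximum cliques bijectively. To see it I would fix a proper $\omega$-colouring $c\colon X\to K_\omega$ and any $\phi\in\End(X)$, and note that the composite $v\mapsto (v\phi)c$ is again a proper $\omega$-colouring of $X$. Restricted to a maximum clique $C$ this composite is a proper colouring of a clique of size $\omega$, hence injective on $C$; therefore $\phi$ is injective on $C$ and $C\phi$ is a clique of size $\omega$, i.e.\ again a maximum clique. This is the only genuinely substantive step, and I expect it to be the main point of the proof; everything else is bookkeeping around it.

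Part 2 is then immediate: given $\phi\in\End(X)$ and an edge $\{v,w\}$ of $Y$, choose a maximum clique $C$ of $X$ containing it; then $C\phi$ is a maximum clique containing $\{v\phi,w\phi\}$, so $\{v\phi,w\phi\}$ is an edge of $Y$ and $\phi\in\End(Y)$. Together with $S\leq\End(X)$ from the first lemma this gives $S\leq\End(X)\leq\End(Y)$. For parts 3 and 4 I would first record that every maximum clique $C$ of $X$ survives intact in $Y$, since each of its edges lies in $C$ and is hence retained; this yields $\omega(Y)\geq\omega$, while $Y\subseteq X$ gives $\omega(Y)\leq\omega$, so $\omega(Y)=\omega$. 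Then $\omega=\omega(Y)\leq\chi(Y)\leq\chi(X)=\omega$ forces $\chi(Y)=\omega$, and the complete core follows exactly as in the first lemma (a surviving $\omega$-clique together with a colouring $Y\to K_\omega$ composes to a retraction onto $K_\omega$). Part 4 now reads off: an edge of $Y$ lies by definition in an $\omega$-clique $C$ of $X$, which by the count above is a maximum clique of $Y$.

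Finally, for part 1 I would sandwich $Y$ between two equivalent nullity conditions. First, $S$ contains a rank-$1$ map if and only if $X$ is null: the forward direction is the first lemma, and conversely, if $X$ has no edges then the two image points of a rank-$\geq 2$ minimal map $t$ would be collapsible by some $f\in S$, making $tf$ of strictly smaller rank --- a contradiction, so $\rank(t)=1$. Second, $X$ null clearly gives $Y$ null since $Y\subseteq X$, while if $X$ is non-null then $\omega\geq 2$ and any maximum clique contributes an edge to $Y$; hence $Y$ null forces $X$ null. Chaining the two equivalences closes part 1. I do not anticipate a serious obstacle beyond the clique-preservation lemma, though some care is needed to state part 4 relative to the clique number of $Y$ rather than of $X$, which is why part 3 is established first.
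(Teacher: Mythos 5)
Your proposal is correct. Note that the paper itself gives no proof of this lemma --- it simply refers the reader to \cite{araujo15} --- so there is no internal argument to compare against; what you have written is a complete, self-contained verification. Your central tool, that in a graph with $\omega=\chi$ every endomorphism composed with a proper $\omega$-colouring is again a proper $\omega$-colouring and hence restricts injectively to any maximum clique, so that maximum cliques are carried bijectively to maximum cliques, is exactly the mechanism underlying the hull/derived-graph machinery in the cited sources, and all four parts do reduce to it plus the bookkeeping you describe ($Y$ a spanning subgraph of $X$, maximum cliques of $X$ surviving intact in $Y$, and the two nullity equivalences for part 1). Your interpretive remark is also well taken and worth keeping explicit: the paper's definition says ``maximal clique,'' but under the literal inclusion-maximal reading every edge of a finite graph lies in a maximal clique and $\Gr'(S)=\Gr(S)$ always, which would make the construction and item 4 vacuous; reading it as ``maximum clique'' (size $\omega$) is forced by item 4 and by the use of the derived graph elsewhere in the paper. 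The only stylistic caution is that your part 3 tacitly uses $\chi(Y)\leq\chi(X)$ via subgraph restriction of a colouring, which is fine but deserves the one clause you give it.
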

 
 Note, strict inequality can hold in \ref{endomorphismsderivedgraph}. For instance if we take the graph given by a disjoint union of complete graphs of different sizes.
 
 One important connection between a hull and its derived graph is given if $S$ is a maximal non-synchronizing submonoid of $T_n$. 
 
 \begin{theorem}[Thm 5.1, \cite{pjc13}]
  Let $S$ be a maximal non-sychronizing submonoid of $T_n$; moreover, let $X=\Gr(S)$ and $Y=\Gr'(S)$. Then,
  \begin{enumerate}
   \item $\End(X)=\End(Y)=S$, and
   \item $X=\Hull(Y)$.
  \end{enumerate}
 \end{theorem}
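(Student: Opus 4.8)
The plan is to exploit the maximality of $S$ together with one elementary observation: the endomorphism monoid of a simple graph carrying at least one edge can never contain a map of rank $1$, and so is itself non-synchronizing. Once this is in place, maximality will pin both $\End(X)$ and $\End(Y)$ down to be exactly $S$, and the second assertion will then drop out of the definition of the hull with no further work.

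First I would establish $\End(X)=S$. The basic lemma on kernel graphs already gives $S\leq \End(\Gr(S))=\End(X)$, so only the reverse inclusion needs attention. Since $S$ is non-synchronizing it contains no map of rank $1$, whence $X=\Gr(S)$ is not the null graph and carries at least one edge. A rank-$1$ transformation collapses all vertices to a single point, so it would send an edge to a loop; as $X$ is a simple graph, $\End(X)$ can contain no rank-$1$ map and is therefore a non-synchronizing submonoid of $T_n$. But $\End(X)$ contains $S$, and $S$ is maximal among non-synchronizing submonoids, so any non-synchronizing submonoid containing $S$ must coincide with it; hence $\End(X)=S$.

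Next I would run the same argument for $Y=\Gr'(S)$. The derived-graph lemma supplies $S\leq \End(\Gr(S))\leq \End(\Gr'(S))=\End(Y)$, and it also records that $\Gr'(S)$ is the null graph precisely when $S$ possesses a rank-$1$ map; since $S$ is non-synchronizing, $Y$ again has an edge. The same loop argument shows $\End(Y)$ is non-synchronizing, and maximality of $S$ forces $\End(Y)=S$, completing the first assertion. For the second, the definition of the hull reads $\Hull(Y)=\Gr(\End(Y))$, and substituting $\End(Y)=S$ yields $\Hull(Y)=\Gr(S)=X$ immediately.

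The one delicate point is the step asserting that $\End(X)$ and $\End(Y)$ are non-synchronizing: everything rests on the two graphs being non-trivial, which I read off from the lemmas characterising exactly when $\Gr(S)$ and $\Gr'(S)$ degenerate to the null graph, together with the simple but essential fact that a graph with an edge admits no rank-$1$ endomorphism. Given maximality, no analysis of the fine internal structure of $S$ is required, which makes the whole argument pleasantly short.
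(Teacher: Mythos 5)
Your argument is correct. Note that the paper itself does not prove this theorem --- it is quoted from \cite{pjc13} without proof --- so there is no in-paper argument to compare against; judged on its own, your proof is complete and uses exactly the right ingredients: $S\leq\End(X)\leq\End(Y)$, the observation that a graph with an edge admits no rank-$1$ endomorphism (so its endomorphism monoid is non-synchronizing), maximality of $S$ to force equality, and then $\Hull(Y)=\Gr(\End(Y))=\Gr(S)=X$ by definition. The only step worth tightening is the claim that $S$ non-synchronizing implies $X=\Gr(S)$ has an edge: the paper's first lemma only states the implication ``$S$ synchronizing $\Rightarrow$ $\Gr(S)$ null,'' not its converse, so you cannot literally ``read off'' what you need from a stated equivalence. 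It is nevertheless easy to supply: either observe (as in the proof of that lemma) that the image of a minimal-rank element of $S$ is a clique of size equal to that rank, which is at least $2$ here; or argue directly that if every pair were collapsed by some element of $S$, composing a minimal-rank element with a map collapsing two of its image points would strictly reduce the rank, forcing a rank-$1$ element. With that one sentence added, the proof stands.
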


 If on the other hand a hull is equal to its derived graph, then the coverse holds, too.
 
 \begin{theorem}[Thm 5.2, \cite{pjc13}]
  Let $X$ be a hull with endomorphism monoid $S=\End(X)$, that is $X=\Gr(S)$. Assume that $X=\Gr'(S)$, then $S$ is a maximal non-synchronizing submonoid of $T_n$.
 \end{theorem}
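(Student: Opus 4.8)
The plan is to verify the two halves of the definition of a maximal non-synchronizing submonoid separately: that $S$ itself is non-synchronizing, and that adjoining any new transformation synchronizes it. Throughout I assume $X$ is non-trivial (it has an edge), since otherwise $\Gr'(S)=X$ is null and $S=T_n$ is synchronizing. The non-synchronizing part is immediate: since $\Gr'(S)=X$ is non-null, the first item of the lemma on derived graphs shows $S$ contains no map of rank $1$. For maximality I must show that for every $f\in T_n\setminus S=T_n\setminus\End(X)$ the monoid $\langle S,f\rangle$ is synchronizing; equivalently, applying the first lemma to $\langle S,f\rangle$, that its kernel graph $W:=\Gr(\langle S,f\rangle)$, a spanning subgraph of $X$, is the null graph, i.e. its clique number drops to $1$.

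The engine of the argument, which I expect to be the main step, is a transitivity property of $\End(X)$ built from the identity $\omega(X)=\chi(X)=r$ (which holds because $X=\Gr(S)$). Fix a proper $r$-colouring $\kappa$ of $X$ and a maximum clique $K$; since $|K|=r=\omega(X)$, the clique $K$ contains exactly one vertex $k_i$ of each colour $i$. First I would check that for every permutation $\sigma$ of the colour set the map $x\mapsto k_{\sigma(\kappa(x))}$ is an endomorphism of $X$ with image $K$: an edge $\{x,y\}$ has $\kappa(x)\neq\kappa(y)$, hence $\sigma\kappa(x)\neq\sigma\kappa(y)$, so its image is an edge of the clique $K$. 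Consequently, for any edge $\{a,b\}$ of $X$ and any two distinct vertices $c,d\in K$ I can choose $\sigma$ with $\sigma\kappa(a)=\kappa(c)$ and $\sigma\kappa(b)=\kappa(d)$, obtaining $\psi\in S$ with $a\psi=c$ and $b\psi=d$.

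Next I would use the hypothesis $X=\Gr'(S)$ to force a drop in rank. Because $f\notin\End(X)$, some edge $\{u,v\}$ of $X$ satisfies $\{uf,vf\}\notin E(X)$ (allowing $uf=vf$). Since $X=\Gr'(S)$, this edge lies in a maximum clique $K_0$, and the retraction $e$ of $X$ onto $K_0$ (the map $x\mapsto k_{\kappa(x)}$ for $K_0$) lies in $S$. Then $ef\in\langle S,f\rangle$ has image $K_0f$ of size at most $r$; if $uf=vf$ this already has rank at most $r-1$, and otherwise $uf,vf$ are distinct and non-adjacent, so some $h\in S$ collapses them and $efh$ has rank at most $r-1$. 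Hence $\langle S,f\rangle$ contains a map of rank at most $r-1$, so $\omega(W)\le r-1$.

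Finally I would close the argument by contradiction. Suppose $W$ has an edge $\{a,b\}$, and let $K$ be any maximum clique of $X$. For distinct $c,d\in K$, if some $m\in\langle S,f\rangle$ had $cm=dm$, then taking $\psi\in S$ with $a\psi=c$, $b\psi=d$ from the transitivity step gives $a(\psi m)=cm=dm=b(\psi m)$, so $\psi m\in\langle S,f\rangle$ collapses $\{a,b\}$, contradicting $\{a,b\}\in E(W)$. Thus no pair of $K$ is collapsed, $K$ is a clique of $W$, and $\omega(W)\ge r$, contradicting the previous paragraph. Therefore $W$ is null and $\langle S,f\rangle$ synchronizes, proving $S$ is maximal non-synchronizing. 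The delicate point is the transitivity lemma: it is what converts a single defect of $f$ into the collapse of an entire maximum clique, and it crucially uses both $X=\Gr(S)$ (to get $\omega=\chi$ and the colour-permuting endomorphisms) and $X=\Gr'(S)$ (to seat the defect of $f$ inside a maximum clique).
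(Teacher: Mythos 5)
The paper itself quotes this result from \cite{pjc13} without reproducing a proof, so there is nothing internal to compare against; judged on its own, your argument is correct and is essentially the standard one for this theorem. The two hypotheses are deployed exactly where they are needed: $X=\Gr(S)$ gives $\omega=\chi=r$ and hence the colour-permuting retractions onto a maximum clique, which yield the transitivity statement (for any edge $\{a,b\}$ and any ordered pair of distinct vertices $c,d$ in a maximum clique there is $\psi\in S$ with $a\psi=c$, $b\psi=d$); and $X=\Gr'(S)$ seats the defective edge of an arbitrary $f\notin S$ inside a maximum clique so that the retraction followed by $f$ (and, if needed, a collapsing map $h\in S$) drops the rank below $r$. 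The contradiction between $\omega(W)\le r-1$ and $\omega(W)\ge r$ then forces $W=\Gr(\langle S,f\rangle)$ to be null. Two small points deserve a line each. First, the final inference from ``$W$ is null'' to ``$\langle S,f\rangle$ is synchronizing'' is not given by the basic lemma you invoke (which only gives the converse direction); it needs either item~1 of the derived-graph lemma (a null $\Gr$ forces a null $\Gr'$) or the direct greedy argument that composing pairwise-collapsing maps reduces the rank to $1$. Second, your exclusion of the null graph at the outset is genuinely necessary, since for $X$ null one has $S=T_n$ and the conclusion fails; the theorem as quoted tacitly assumes $X$ is non-trivial.
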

 
 This research will not cover the properties of the derived graph, but instead focus on the kernel graph and the hulls, since these constructions are more approachable. For instance, one of the properties of the kernel graph is that its clique number is equal to its chromatic number which provides endomorphisms, and many graphs were investigated for complete cores and their endomorphisms. However, this makes us wonder how graphs with incomplete core but still admitting singular endomorphisms fit into this picture? We will discuss this briefly in the next section.

\section{The Hull and Colourings}\label{section3}

Once again, Theorem \ref{thm1} has reignited the study of graph endomorphisms for many semigroup and graph theorists;  however, the theorem actually mentions graphs having complete cores, what about graphs with other types of cores? Would other colourings play a role then? Of course, many examples of graphs admitting singular endomorphisms with non-complete core exist (cf. Example \ref{examplenoncompletecore}), but how do these graphs fit into the picture? What would happen, if other types of colourings would be used, for example Kneser colourings or circular colourings? The kernel graph and its hull offer answers to these questions. 

Recall, if $S$ is a transformation semigroup on $n$ points, then the kernel graph $\Gr(S)$ is the graph with vertex set $\{1,...,n\}$ where two vertices $v$ and $w$ are adjacent, if there is \textbf{no} transformation $t\in S$ with $vt=wt$. If $S$ is the endomorphism monoid of a graph $\Gamma$, then $\Gr(S)$ is the hull of $\Gamma$.

Suppose a graph $\Gamma$ admits singular endomorphisms; then a hull $Y$ can be obtained admitting all the endomorphisms of $\Gamma$; in addition, $Y$ has complete core. So clearly, whenever there is a non-hull graph $\Gamma$, a hull admitting the endomorphisms of $\Gamma$ can be found. 

This argument can be used in both directions. On the one hand, if the goal is to analyse $\End(Y)$ of a hull $Y$ through its subsemigroups, then it might be convenient to look for graphs $\Gamma$ with $Y=\Hull(\Gamma)$. There might be a chance that $\End(\Gamma)$ is a proper subsemigroup of $\End(Y)$. On the other hand, there might be purposes where information about $\End(Y)$ is good enough, that is ignoring any subgraphs might be clever and save some work. For instance, determining almost synchronizing groups allows to ignore any non-hulls and to focus on the endomorphisms of hulls.

Another interesting question is, what happens if other types of colourings would be used instead of $k$-colourings? So, what endomorphisms occur for graphs not having chromatic number equal to clique number ($\chi=\omega$) for $k$-colourings, but instead $\chi_C=\omega_C$ for a circular colouring $C$, or Kneser colouring, or other types of colourings? Or in other words, what effect do endomorphisms of graphs with non-complete core have on synchronization? Well, the result is the same as above. Since colourings of $\Gamma$ are homomorphisms to a graph $C$ on $r$ vertices, endomorphisms can be obtained by simply composing homomorphisms as
\[\Gamma\rightarrow C \rightarrow \Gamma\]
or
\[\Gamma\rightarrow C \rightarrow K_r \rightarrow \Gamma.\]
Hence, again it depends on whether we want to ignore substructures or not. So, using different colourings does not lead to new insights, which means that hulls contain all information. Consequently, hulls have a superior structure, and for this it is of interest to know which graphs are hulls and which are not. Thus, the next section is dedicated to provide examples of hulls and non-hulls.

\section{Examples of Hulls and Non-Hulls}\label{section4}

\subsection{Rank 3 Graphs}


The (permutation) \emph{rank} of a transitive permutation group
$G$ acting on a set $\Omega$ is the number of orbits of $G$ on $\Omega\times \Omega$, the set of ordered
pairs of elements of $\Omega$. Equivalently, it is the number of orbits on $\Omega$ of
the stabiliser of a point of $\Omega$. Certainly, these orbits correspond to graphs; therefore, a \emph{rank $3$ graph} is a graph whose transitive automorphism group has permutation rank $3$.

If $|\Omega| > 1$, then the rank of $G$ is at least $2$, because
no permutation can map $(x,x)$ to $(x,y)$. A primitive group of rank~$2$
is doubly transitive (admitting only the trivial graphs), and thus the first non-trivial cases
are primitive groups of rank~$3$. The aim of this section is to prove the following result. 

\begin{theorem}\label{rank3graphs} 
Every rank $3$ graph with singular endomorphisms is a hull.
\end{theorem}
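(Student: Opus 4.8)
The plan is to unwind the definition of a hull and reduce the claim to a single transitivity statement. Recall that $X$ is a hull precisely when $X = \Gr(\End(X))$, that is, two vertices $v,w$ are adjacent in $X$ if and only if \emph{no} endomorphism of $X$ identifies them. Since $X$ is always a spanning subgraph of $\Hull(X)$, and since any endomorphism preserves edges (so two adjacent vertices can never be collapsed, as their common image would be a loop), every edge of $X$ is automatically an edge of $\Hull(X)$. Hence it suffices to prove the reverse inclusion: that every pair of \emph{non-adjacent} vertices of $X$ is collapsed by some endomorphism, so that no extra edges appear in $\Hull(X)$.

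First I would record the key consequence of the rank $3$ hypothesis. Writing $G = \Aut(X)$ and $\Omega$ for the vertex set, the three orbits of $G$ on $\Omega \times \Omega$ are the diagonal, the ordered edges, and the ordered non-edges; in particular $G$ acts transitively on the ordered non-adjacent pairs. This is the structural input that lets a single collapse propagate to all non-edges.

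Next I would use the standing hypothesis that $X$ admits a singular endomorphism $f$. Being non-injective, $f$ identifies some pair of distinct vertices $a \neq b$ with $af = bf$; because $f$ preserves edges and $X$ is loopless, $a$ and $b$ must be non-adjacent, so $f$ collapses a specific non-edge $\{a,b\}$. To reach an arbitrary non-edge $\{v,w\}$, I would invoke transitivity to pick $g \in G$ with $ag = v$ and $bg = w$, and then consider $g^{-1}f \in \End(X)$ (a genuine endomorphism, since $\Aut(X) \le \End(X)$ and $\End(X)$ is closed under composition). Using the right-action convention of the paper, $v(g^{-1}f) = (vg^{-1})f = af = bf = (wg^{-1})f = w(g^{-1}f)$, so $g^{-1}f$ collapses $\{v,w\}$. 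Thus every non-edge of $X$ is collapsed, no non-edge survives into $\Hull(X)$, and therefore $\Hull(X) = X$.

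There is no serious obstacle once this reduction is set up; the proof is driven entirely by the combination of rank-$3$ symmetry with the existence of one singular map. The only point needing care is the bookkeeping of the composition under the right-action convention (ensuring it is $g^{-1}f$, not $fg^{-1}$, that sends both $v$ and $w$ to the common value $af$). It is also worth remarking why the hypothesis on singular endomorphisms is indispensable: without it one would have $\End(X) = \Aut(X)$, a group, which by the basic properties of $\Gr$ forces $\Hull(X)$ to be complete and hence different from the (non-complete) rank $3$ graph $X$.
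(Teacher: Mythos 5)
Your proof is correct, but it takes a genuinely different route from the paper's. The paper argues at the level of automorphism groups: it observes that $\Aut(X)$ is $2$-closed, that $\Aut(X)\leq\Aut(\Hull(X))$ forces $\Aut(\Hull(X))$ to have rank $2$ or $3$, rules out rank $2$ (the complete graph has no singular endomorphisms, and the null graph is not a spanning supergraph of $X$), and then uses $2$-closure to conclude $\Aut(\Hull(X))=\Aut(X)$, so that $\Hull(X)$ must be a union of orbitals, i.e.\ $X$ or $\overline{X}$, and the spanning-subgraph property eliminates $\overline{X}$. You instead work at the level of individual pairs: a singular endomorphism $f$ collapses one non-edge $\{a,b\}$, and transitivity of $\Aut(X)$ on ordered non-adjacent pairs lets you transport that collapse to every non-edge via $g^{-1}f$, so no non-edge of $X$ survives into $\Hull(X)$. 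Both arguments rest on the same structural fact (the non-edges form a single orbital), but yours is more constructive and arguably buys more: it shows that the entire hull is already generated by the single semigroup $\langle\Aut(X),f\rangle$ for any one singular $f$, which connects directly to the paper's later discussion of generating sets for $\Gr(S)$, whereas the paper's version is shorter and avoids exhibiting the collapsing maps explicitly at the cost of invoking $2$-closure. Your closing remark about why the singular-endomorphism hypothesis is needed is also accurate and mirrors the paper's elimination of the complete graph.
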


Although a complete classification of the primitive  groups of rank~$3$ is known (see \cite{kantor82,liebeck86,liebeck87}),
we do not use this. Also, we do not use the combinatorial properties of strongly regular graphs which admitted by rank $3$ graphs. (A graph is \emph{strongly regular} if the numbers $k$, $\lambda$, $\mu$ of neighbours of a vertex, an edge, and a non-edge respectively are independent of the chosen vertex, edge or non-edge. See~\cite{pjcdesignsbook} for the definition and properties of strongly regular graphs. It is well known that a group with permutation rank $3$ is contained in the automorphism group of a strongly regular graph.) All we need to prove this is the orbit structure of rank $2$ and rank $3$ groups.



%


\begin{proof}
 Let $X$ be the rank $3$ graph and $\Aut(X)$ its automorphism group and $X'=\Hull(X)$. Note, $\Aut(X)$ is $2$-closed. For $\Aut(X)\leq \Aut(X')$, the automorphism group of $X'$ has either rank $3$ or rank $2$. If $\Aut(X')$ would have rank $2$, then $X'$ would be the null graph or the complete graph, but the complete graph has no singular endomorphisms and the null graph is not a supergraph of $X$. Hence, $\Aut(X')$ has rank $3$ and, thus, it acts on $X$ by automorphisms. Consequently, $\Aut(X)=\Aut(X')$, for the primed one is $2$-closed. This means, $X'$ is either $X$ or its complement $\overline{X}$. However, $X'$ cannot be $\overline{X}$, since $X$ is a spanning subgraph of $X'$.
\end{proof}

As mentioned in the introduction, the motivation to this research comes from synchronization theory and in \cite{pjc08} the authors covered rank $3$ graphs in that background. Here will list some families of rank $3$ graphs from their paper which are hulls.

\begin{corollary}
 \begin{enumerate}
  \item The Square lattice graph $L_2(n)$, for $n\geq 3$, is a hull.
  \item The triangular graph $T(n)$, for $n\geq 5$, is a hull.
  \item The Paley graph $P(q)$, $q$ a prime power congruent to $1$ mod $4$ and a square, is a hull.
 \end{enumerate}
\end{corollary}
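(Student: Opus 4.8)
The plan is to deduce all three statements directly from Theorem~\ref{rank3graphs}. For each graph $X$ in the list it suffices to check two things: that $X$ is a rank $3$ graph, and that $X$ carries a singular endomorphism. Once both hold, Theorem~\ref{rank3graphs} gives $X=\Hull(X)$ at once, so the corollary is essentially an assembly job.

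The rank $3$ property is classical and uniform across the three cases: one exhibits a transitive automorphism group whose point stabiliser has exactly two non-trivial orbits. For $L_2(n)$ I would use $S_n\wr S_2$ acting on $\{1,\dots,n\}^2$; for $T(n)$ the group $S_n$ acting on the $2$-subsets of $\{1,\dots,n\}$; and for $P(q)$ the affine maps $x\mapsto a^2x+b$ together with the Frobenius automorphisms of $\F_q$. In each case the stated hypotheses ($n\ge 3$, $n\ge 5$, and $q$ a square with $q\equiv 1\pmod 4$) ensure the action is genuinely of rank $3$ rather than of rank $2$ (i.e.\ $2$-transitive), so that $X$ is a non-trivial rank $3$ graph in the sense needed by Theorem~\ref{rank3graphs}.

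The substantive ingredient is the production of a singular endomorphism, and here I would invoke the complete-core construction already recalled in Section~\ref{section2}: whenever $\omega(X)=\chi(X)=r$ and $r<|V(X)|$, the composite $X\to K_r\to X$, a proper $r$-colouring followed by the embedding of a maximum clique, is a non-injective endomorphism. For $L_2(n)$ a maximum clique is a ``row'' of size $n$, and a proper $n$-colouring is exactly a Latin square of order $n$, so $\omega=\chi=n<n^2$. For $T(n)$ a maximum clique is a ``star'' of size $n-1$, and a proper colouring is a partition of the edges of $K_n$ into matchings; when this meets the clique bound one gets $\omega=\chi=n-1$. For $P(q)$ with $q=s^2$ the subfield $\F_s$ forms a clique of size $s=\sqrt q$ (every nonzero element of $\F_s$ is a square in $\F_q$, using that $s$ is odd), and $P(q)$ is $s$-colourable by translates of $\F_s$, giving $\omega=\chi=\sqrt q<q$.

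The main obstacle is precisely these chromatic-number computations, since the easy construction applies only when $\omega=\chi$: this is clean for $L_2(n)$ and for $P(q)$ at a square prime power, but delicate for $T(n)$ when $n$ is odd, where a one-factorisation of $K_n$ requires $n$ rather than $n-1$ colours and the naive colouring argument breaks. For the cases not covered by the $\omega=\chi$ argument I would appeal to the explicit singular endomorphisms constructed for these families in \cite{pjc08}, where they occur exactly as the non-synchronizing rank $3$ examples; feeding any such endomorphism into Theorem~\ref{rank3graphs} then completes the proof.
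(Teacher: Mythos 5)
Your overall strategy --- verify that each family is a rank $3$ graph with a singular endomorphism and then quote Theorem~\ref{rank3graphs} --- is exactly the intended route; the paper gives no separate argument for this corollary beyond pointing to \cite{pjc08} for the singular endomorphisms. Your treatment of $L_2(n)$ and of $P(q)$ is correct and essentially self-contained: the Latin-square colouring gives $\omega=\chi=n<n^2$ for the rook's graph, and for $q=s^2$ the subfield $\F_s$ is a clique whose additive translates partition $P(q)$ into $s$ cliques, so by self-complementarity $\omega=\chi=\sqrt q<q$. Feeding the resulting colouring endomorphism $X\to K_r\to X$ into Theorem~\ref{rank3graphs} settles items 1 and 3.

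The gap is in item 2, and you have correctly located it but your proposed repair does not close it. For odd $n$ the chromatic index of $K_n$ is $n$, not $n-1$, so $\chi(T(n))=n>n-1=\omega(T(n))$; and the fallback you invoke --- ``the explicit singular endomorphisms constructed for these families in \cite{pjc08}'' --- does not exist in this case, because Cameron and Kazanidis prove there that $T(n)$ is a \emph{core} when $n$ is odd. A core has no singular endomorphisms at all, so Theorem~\ref{rank3graphs} simply does not apply; worse, if $\End(T(n))=\Aut(T(n))$ then no endomorphism identifies any two vertices, so $\Hull(T(n))$ is the complete graph and $T(n)$ is \emph{not} a hull (e.g.\ $T(5)$, the complement of the Petersen graph). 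So no proof along these lines can succeed for odd $n$: item 2 is only correct as stated for even $n\geq 6$ (where $\omega=\chi=n-1$ and your colouring argument goes through verbatim), and you should either restrict to that case or flag the discrepancy rather than defer it to a citation that proves the opposite.
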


Further examples of rank $3$ graphs are given by line graphs of projective spaces \cite{pjc08}.

\subsection{Unions of Complete Graphs and Multi-partite Graphs}

This section covers multi-partite graphs and their complements the unions of complete graphs. We will briefly describe their endomorphisms and show that these graphs are hulls.

\paragraph{Unions of Complete Graphs}
The union of $n$ copies of the complete graph $K_r$ is denoted by $n.K_r$. This graph has $n\cdot r$ vertices, is disconnected, and its automorphism group is $S_r \wr S_n$ with the imprimitive wreath product action. Moreover, its endomorphism monoid is easy to calculate: a singular endomorphism maps simply one copy of $K_r$ to another one.

Now, we consider the case where the $n$ copies are complete graphs of distinct sizes. So, let $X$ be the graph
\[K_{r_1}.K_{r_2}.\cdots .K_{r_s},\]
with $r_1\geq r_2\geq \cdots \geq r_s$. Then, a singular endomorphism maps smaller complete graphs to bigger complete graphs, that is, it maps $K_j$ to $K_i$, for $i,j\in \{r_1,...,r_s\}$ and $i>j$. So, essentially, the singular endomorphims admit a simple structure as well.

\paragraph{Multi-partite Graphs}

The multi-partite graph is the complement of the previous graph; this graph plays a major role in mathematics. However, its endomorphism monoid has a much more complicated structure (at least as a semigroup).

Let $X$ be the multi-partite graph
\[\overline{K_{r_1}.K_{r_2}.\cdots .K_{r_s}}.\]
How do the singular endomorphisms look like? Well, a singular endomorphism collapses two vertices, if and only if they are in the same part; so, it behaves nicely, too. Although these easily described endomorphims provide a chaotic semigroup structure of the endomorphim monoid, they are fine enough to construct the kernel graph. 

\begin{lemma}
The union of complete graphs $K_{r_1}.K_{r_2}.\cdots .K_{r_s}$ and its complement the multi-partite graph $\overline{K_{r_1}.K_{r_2}.\cdots .K_{r_s}}$ are hulls, for any values of $r_i$.
\end{lemma}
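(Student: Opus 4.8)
The plan is to show that each graph equals its own hull by characterizing exactly which non-edges of $X$ can be collapsed by some endomorphism, and verifying that this collapsing relation reproduces precisely the original edge set. Recall that $\Hull(X)=\Gr(\End(X))$ has the same vertices as $X$, with $v,w$ adjacent in $\Hull(X)$ if and only if there is \emph{no} $f\in\End(X)$ with $vf=wf$. Since the excerpt already records that $X$ is a spanning subgraph of $\Hull(X)$, it suffices to prove the reverse containment: every non-edge $\{v,w\}$ of $X$ remains a non-edge in the hull, i.e.\ for every pair of vertices not joined in $X$ there \emph{is} an endomorphism collapsing them.

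First I would treat the union of complete graphs $X=K_{r_1}.K_{r_2}.\cdots.K_{r_s}$. A non-edge of $X$ is a pair $\{v,w\}$ lying in two distinct components, say $v\in K_{r_i}$ and $w\in K_{r_j}$ with (without loss of generality) $r_i\leq r_j$. The discussion preceding the lemma already observes that a singular endomorphism maps a smaller complete component into a larger one; so I would exhibit the explicit endomorphism that maps the component $K_{r_i}$ isomorphically onto a clique inside $K_{r_j}$ sending $v$ to $w$, while fixing every other vertex. This is a homomorphism because within each $K_{r_i}$ adjacency is preserved by any injection into $K_{r_j}$, and vertices in different components are never adjacent, so no forbidden edge is created. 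It collapses $\{v,w\}$, witnessing that $\{v,w\}$ is a non-edge of the hull. Hence $\Hull(X)=X$.

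For the multipartite graph $\overline{X}=\overline{K_{r_1}.K_{r_2}.\cdots.K_{r_s}}$ I would argue dually. Here a non-edge is a pair $\{v,w\}$ lying in the \emph{same} part, i.e.\ the same block $K_{r_i}$ of the complement decomposition, and the preceding paragraph notes that a singular endomorphism of $\overline{X}$ collapses exactly such same-part pairs. I would write down the endomorphism that identifies $v$ with $w$ (sending $v\mapsto w$, fixing everything else) and check it preserves adjacency: two vertices are adjacent in $\overline{X}$ precisely when they lie in different parts, so merging two vertices of one part can never destroy or fabricate an edge between distinct parts, giving a valid endomorphism that collapses the chosen non-edge. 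Thus every non-edge of $\overline{X}$ is a non-edge of its hull, and $\Hull(\overline{X})=\overline{X}$.

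The main obstacle is not the existence of the collapsing maps, which are routine, but ensuring completeness of the argument: one must confirm that \emph{every} non-edge is of the claimed type, so that no edge of $\Hull(X)$ is accidentally lost. Concretely, I must verify that the maps constructed are genuinely endomorphisms (injectivity within a component, preservation of non-adjacency across components) and that in the union-of-complete-graphs case a smaller component really does embed into a larger one without collision. The one genuine subtlety is the degenerate situation of repeated or unit sizes $r_i$: if some $r_i=1$ the corresponding component is an isolated vertex, and if several $r_i$ coincide the candidate target clique may equal the source, so I would check separately that the constructed endomorphism is still well defined and still collapses the intended pair in these boundary cases. Once these checks are in place, combining both halves yields the lemma for all values of the $r_i$.
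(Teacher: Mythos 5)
Your proposal is correct and follows essentially the same route as the paper, which states the lemma without a formal proof and relies on the immediately preceding description of the singular endomorphisms (smaller cliques map into larger ones; same-part vertices collapse in the multipartite case). You simply make explicit the verification that every non-edge is collapsed by such an endomorphism while every edge survives, which is exactly the content the paper treats as evident.
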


\subsection{Unions of Cores and their Complements}

\paragraph{Unions of Cores}
The previous setting can be generalized by taking unions of a graph $Y$ where $Y$ is a core. So, let $\Gamma$ be the graph
\[Y.Y.\cdots .Y,\]
given by $n$ copies of $Y$; we will write $\Gamma=n.Y$. Like for $U(n,r)$, the singular endomorphism monoid and the hull of $\Gamma$ can be determined easily. The following two results are obvious.

\begin{proposition}
\begin{enumerate}
 \item Let $\Gamma$ be the graph from above, and $t$ an endomorphisms collapsing two of the factors $Y$ and fixing the others, then it holds \[\End(\Gamma)=\langle \Aut(\Gamma),t\rangle .\]
 \item If, in addition, $Y$ is a transitive graph of order $r$, then \[\Hull(n.Y)=\Hull(n.K_r)=n.K_r.\]
\end{enumerate}
\end{proposition}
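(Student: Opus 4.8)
The plan is to first pin down the endomorphism monoid of $\Gamma = n.Y$ completely, and then read off both assertions from its wreath-product description. Writing the vertices of $\Gamma$ as pairs $(i,y)$ with $i\in\{1,\dots,n\}$ indexing the copies and $y$ a vertex of $Y$, I claim every $f\in\End(\Gamma)$ has the form $(i,y)\mapsto(\sigma(i),a_i(y))$ for some transformation $\sigma\in T_n$ and automorphisms $a_i\in\Aut(Y)$. Indeed, since each copy $Y_i$ is connected its image lies in a single copy $Y_{\sigma(i)}$, so $f$ restricts to a homomorphism $Y\to Y$; because $Y$ is a core this homomorphism is an automorphism $a_i$. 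This identifies $\End(\Gamma)$ with the monoid wreath product $\Aut(Y)\wr T_n$ and $\Aut(\Gamma)$ with $\Aut(Y)\wr S_n$, with multiplication $(\vec a,\sigma)(\vec b,\tau)=(\vec c,\tau\sigma)$ where $c_i=b_{\sigma(i)}a_i$. In this language the generator $t$ is the element $(\mathbf 1,e)$, where $e\in T_n$ is a rank-$(n-1)$ idempotent identifying two indices and fixing the rest, and $\mathbf 1$ is the trivial tuple of automorphisms.

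For the first assertion I would reduce generation to a classical fact about $T_n$. Any element factors as $(\vec a,\sigma)=(\vec a,\mathrm{id})\,(\mathbf 1,\sigma)$, and the first factor lies in the base group $\Aut(Y)^n\le\Aut(\Gamma)$; so it suffices to produce $(\mathbf 1,\sigma)$ for every $\sigma\in T_n$. A direct check with the multiplication rule shows that elements with trivial automorphism part are closed under composition and project isomorphically onto $T_n$ via the second coordinate. Since $T_n=\langle S_n,e\rangle$ is generated by the symmetric group together with a single rank-$(n-1)$ idempotent, I would write $\sigma$ as a word in permutations and $e$, lift each permutation to the corresponding element of $\Aut(\Gamma)$ and each occurrence of $e$ to $t$, and the resulting product is exactly $(\mathbf 1,\sigma)$. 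Hence $\End(\Gamma)=\langle\Aut(\Gamma),t\rangle$.

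For the second assertion I would compute the hull directly from the description of $\End(\Gamma)$, recalling that two vertices are adjacent in $\Hull(\Gamma)=\Gr(\End(\Gamma))$ exactly when no endomorphism identifies them. Two distinct vertices $(i,y),(i,z)$ in the same copy are never identified, since $f$ restricted to $Y_i$ is the bijection $a_i$; thus each copy induces a clique $K_r$. For two vertices $(i,y),(j,z)$ in different copies I would invoke vertex-transitivity of $Y$: choosing $\sigma$ with $\sigma(i)=\sigma(j)$ and an automorphism $a_j$ with $a_j(z)=y$, which exists by transitivity, yields an endomorphism identifying them, so no cross-copy edges appear. Therefore $\Hull(n.Y)=n.K_r$, and the same computation applied to $Y=K_r$ gives $\Hull(n.K_r)=n.K_r$.

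The main obstacle is the structural claim in the first step, namely that every endomorphism is ``copy-isomorphic'', since this is where connectivity and the core property of $Y$ are both essential. I note that in the second assertion connectivity comes for free: a vertex-transitive core must be connected, because a disconnected vertex-transitive graph has mutually isomorphic components and therefore retracts onto one of them, contradicting the core property. For the first assertion one should keep $Y$ connected in mind, as disconnected cores with homomorphically incomparable components would destroy the clean wreath-product picture; granting this, the remaining steps are the routine verifications indicated above.
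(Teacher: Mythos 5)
Your proof is correct and follows essentially the same route as the paper's (which is only a two-line sketch): since $Y$ is a core, every endomorphism of $n.Y$ maps factors to factors by automorphisms, giving the wreath-product picture and part 1 via the generation of $T_n$ by $S_n$ together with one rank-$(n-1)$ idempotent, while vertex-transitivity of $Y$ collapses any cross-factor pair, giving part 2. Your explicit flagging of the connectedness of $Y$ --- left implicit in the paper, and automatic in part 2 since a vertex-transitive core is connected --- is a worthwhile addition, because for a disconnected core with homomorphically incomparable components the wreath-product description, and hence part 1, can genuinely fail.
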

\begin{proof}
 Like for $n.K_r$, the group $\Aut(\Gamma)$ is permuting vertices within each $Y$ and the factors $Y$. Since $Y$ is a core, an endomorphism of $\Gamma$ is mapping some factors $Y$ to other factors $Y$. Thus, the first result follows.
 
 For the second part, we need to show that if two vertices come from distinct factors $Y$, then there is an endomorphism collapsing these vertices. Clearly, there are endomorphisms mapping one factor $Y$ to another. However, since $Y$ is transitive, each vertex of the first factor $Y$ can be mapped to any vertex of the second factor $Y$.
\end{proof}

\begin{remark}
 The second part of the previous lemma is not true, if $Y$ is non-transitive. For instance, let $Y$ be the wheel graph on 6 vertices, that is $5$ vertices form a cycle and the $6$th vertex is adjacent to all the others. This graph is not regular; hence, not transitive. Then, $\Hull(3.\Gamma)$ is a non-regular graph, and thus not equal to $3.K_6$.
\end{remark}


The odd cycle graphs $C_{2n+1}$ form another well-known family of graphs which are cores. The following example shows a surprising relation between unions of odd cycles and unions of complete graphs.

\begin{example}\label{chap8examplehullofgraph}
 Let $\Gamma=3.C_{5}$ be the graph given by $3$ copies of $C_5$. Both graphs $\Gamma$ and $3.K_5$ generate the same hull, but $\End(\Gamma)$ has size $27,000$, whereas $\End(3.K_5)$ has size $46,656,000$.
\end{example}

\paragraph{The Complementary Graph}
Next, the complementary graph $\overline{\Gamma}$ is considered. Unlike for the multi-partite graph it turns out that not all graphs $\overline{\Gamma}$ admit singular endomorphisms. Take a look at the next example.

\begin{example}
 Let $\overline{\Gamma}$ be the graph $\overline{3.C_5}$. The complement of the cyclic graph $C_5$ has no proper endomorphisms and for this reason $\overline{\Gamma}$ has no proper endomorphisms.
\end{example}

\begin{proposition}
 The graph $\overline{Y}$ is a core if and only if $\overline{n.Y}$ is a core.
\end{proposition}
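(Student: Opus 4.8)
The plan is to pass to complements, where the problem becomes transparent. Since the complement of a disjoint union is the join of the complements, $\overline{n.Y}$ is the join of $n$ copies of $Z:=\overline{Y}$; that is, the graph on $n$ disjoint copies of $Z$ in which every vertex of one copy is made adjacent to every vertex of every other copy, which I will write $Z^{\vee n}$. The proposition is then equivalent to: $Z$ is a core if and only if $Z^{\vee n}$ is a core. Throughout I will lean on the elementary join facts that an independent set must lie inside a single factor while a clique may meet every factor, and, crucially, that any induced subgraph of a join is again the join of its intersections with the factors (because all cross-factor pairs are edges).

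For the direction $Z^{\vee n}$ core $\Rightarrow Z$ core I would argue contrapositively. If $Z$ is not a core it carries a non-surjective endomorphism $f$; applying $f$ separately inside each copy yields a map $\widehat{f}$ on $Z^{\vee n}$. This $\widehat{f}$ is an endomorphism, since within a copy it agrees with $f$ and a cross-copy edge is sent to a cross-copy edge (each vertex stays in its own copy), and it fails to be surjective because in every copy it omits the vertices outside $\im f$. Hence $Z^{\vee n}$ is not a core either.

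The substantial direction is $Z$ core $\Rightarrow Z^{\vee n}$ core. Using associativity of the join I would induct on $n$ via $Z^{\vee n}=Z\vee Z^{\vee(n-1)}$, reducing everything to the lemma that the join $A\vee B$ of \emph{two} cores is a core. To prove the lemma, fix a retraction $\rho$ onto $R:=\core(A\vee B)$. As $R$ is an induced subgraph it splits as $R=R_A\vee R_B$ with $R_A=R\cap V(A)$ and $R_B=R\cap V(B)$ induced subgraphs of $A$ and $B$. If I can show $\rho(V(A))\subseteq R_A$ and $\rho(V(B))\subseteq R_B$, then $\rho$ restricts to retractions $A\to R_A$ and $B\to R_B$; since $A$ and $B$ are cores these force $R_A=A$ and $R_B=B$, whence $R=A\vee B$ and the join is its own core.

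The main obstacle is precisely that an endomorphism of a join could a priori move vertices across the two factors, so the restrictions above need not even be well defined; indeed automorphisms genuinely do mix factors, as in $K_2\vee\cdots\vee K_2=K_{2m}$. The observation that dissolves this is that, because the graph is loopless and every cross-factor pair is an edge, a homomorphism of $A\vee B$ must keep the image of $V(A)$ disjoint from the image of $V(B)$: if $g(x)=g(w)$ for $x\in V(A)$ and $w\in V(B)$, then $g(x)g(w)$ would be a loop. Applied to $\rho$, the sets $S:=\rho(V(A))$ and $T:=\rho(V(B))$ are disjoint and together partition $V(R)=R_A\sqcup R_B$; since every vertex of $R$ is fixed by $\rho$ and lies in its own factor we have $R_A\subseteq S$ and $R_B\subseteq T$, and disjointness then forces $S=R_A$ and $T=R_B$. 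This no-mixing statement is the crux of the whole argument, and everything around it is the bookkeeping sketched above.
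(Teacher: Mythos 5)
Your proof is correct, and the easy direction (if $\overline{Y}$ is not a core, copy the non-surjective endomorphism diagonally into each factor of the join) is exactly the paper's ``conversely'' step. For the substantial direction, however, you take a genuinely more careful route than the paper. The paper simply asserts that a singular endomorphism of $\overline{n.Y}$ ``restricted to $\overline{Y}$'' gives a singular endomorphism of $\overline{Y}$ --- but an arbitrary endomorphism of a join need not map a factor into itself, so that restriction is not obviously well defined; the paper leaves this point unaddressed. You dissolve precisely this difficulty: you pass to the join decomposition, reduce by induction to the statement that the join of two cores is a core, and then use the retraction $\rho$ onto the core together with the no-mixing observation (the images $\rho(V(A))$ and $\rho(V(B))$ are disjoint because every cross-factor pair is an edge and the graph is loopless) plus the counting/containment argument $R_A\subseteq S$, $R_B\subseteq T$, $S\sqcup T=R_A\sqcup R_B$ to force $\rho(V(A))=R_A$ and $\rho(V(B))=R_B$. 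This yields honest retractions of $A$ and $B$ onto $R_A$ and $R_B$, whence $R=A\vee B$. What your approach buys is rigour at exactly the point the paper hand-waves, at the cost of invoking the existence of a retraction onto the core and a short induction; as a by-product you essentially prove the stronger structural fact that the core of a join of cores is the whole join. Both arguments agree in spirit (lift and restrict singular endomorphisms between a factor and the join), so I would classify yours as a corrected and completed version of the paper's sketch rather than a wholly different proof.
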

\begin{proof}
 Assume $\overline{Y}$ is a core. It is easy to construct a singular endomorphism of $\overline{n.Y}$ which restricted to $\overline{Y}$ is a singular endomorphism of $\overline{Y}$. Thus, $\overline{n.Y}$ has no proper endomorphisms. Conversely, an endomorphism of $\overline{Y}$ can be extended to an endomorphism of $\overline{n.Y}$ by collapsing vertices in each subgraph $\overline{Y}$ in the same way.
\end{proof}

 \subsection{Cycles, Paths and other Non-Hulls}
 
 \begin{lemma}
  Let $C_n$ be a cycle with $n\geq 5$.
  \begin{enumerate}
   \item If $n$ is odd, then the hull of $C_n$ is the complete graph.
   \item If $n$ is even, then the hull of $C_n$ is the complete bipartite graph.
  \end{enumerate}
 \end{lemma}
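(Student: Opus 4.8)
The plan is to compute, for each parity of $n$, exactly which pairs of vertices can be collapsed by some endomorphism of $C_n$, since by definition $\Hull(C_n)=\Gr(\End(C_n))$ joins two vertices precisely when \emph{no} endomorphism identifies them. Thus the whole argument reduces to understanding the identifications realised by $\End(C_n)$, and the two cases of the lemma correspond to two very different structures for this monoid.

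For odd $n$, the key fact is that odd cycles are cores: every endomorphism is an automorphism. I would justify this directly by noting that a homomorphism sends the whole cycle to a closed walk of odd length $n$, whose image is a connected subgraph of $C_n$ containing an odd closed walk; since any proper connected subgraph of a cycle is a path (hence acyclic and bipartite) and admits no odd closed walk, the image must be all of $C_n$. A surjective endomorphism of a finite graph is bijective, hence an automorphism. Consequently every $f\in\End(C_n)$ is injective, so no two distinct vertices are ever identified, and every pair is adjacent in the hull: $\Hull(C_n)=K_n$.

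For even $n$ the graph $C_n$ is bipartite with two colour classes of size $n/2$, and I claim two vertices collapse exactly when they lie in the same class. The ``same class'' direction is immediate from the retraction of $C_n$ onto one of its edges: the proper $2$-colouring, read as a map onto a fixed edge $\{a,b\}$, is an endomorphism sending one class to $a$ and the other to $b$, thereby collapsing every pair within a class at once. For the ``different class'' direction I would use a parity argument: if $u,v$ lie in different classes then the two arcs of the cycle between them both have odd length, and a homomorphism carries such an arc to a walk of the same (odd) length; were $f(u)=f(v)$, this would produce an odd closed walk in $C_n$, contradicting bipartiteness. Hence different-class pairs are never collapsed, the hull has edges exactly between the two classes, and $\Hull(C_n)=K_{n/2,n/2}$.

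The main obstacle, such as it is, lies in the odd case: one must be sure that odd cycles really are cores, i.e. that the homomorphic image of the cycle cannot degenerate onto a shorter, necessarily bipartite, subgraph. Once the surjectivity of every endomorphism is secured, the odd case collapses to a single line. The even case is comparatively routine, since both directions rest on standard bipartite facts — a retraction onto an edge and the preservation of walk-parity under homomorphisms — so I expect the bulk of the care to go into the core step for odd $n$.
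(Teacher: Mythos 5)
Your proof is correct and follows essentially the same route as the paper's: for odd $n$ one uses that odd cycles are cores (so no endomorphism identifies any two vertices and the hull is complete), and for even $n$ the $2$-colouring, read as a retraction onto an edge, collapses each bipartition class. You additionally prove the core property from first principles and verify explicitly that vertices in \emph{different} classes are never identified (the walk-parity argument) --- a converse direction the paper's one-line proof leaves implicit --- but the underlying argument is the same.
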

 \begin{proof}
  
  Odd cycles are cores, thus there are no endomorphisms collapsing edges. Even cycles can be coloured with $2$ colours red and blue, so there are endomorphisms collapsing all vertices with colour red and others collapsing vertices with colour blue. Hence, edges only appear between vertices with distinct colours.
 \end{proof}
 
Note, the even cycle is a transitive non-hull graph with complete core. So, not all graphs with complete core are hulls.
 
 \begin{lemma}
  Let $P_n$ be a path with $n\geq 5$. Then, the hull of $P_n$ is the complete bipartite graph with parts of size $\ceil{\frac{n}{2}}$ and $\floor{\frac{n}{2}}$.
 \end{lemma}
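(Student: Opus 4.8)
The plan is to follow the template of the preceding even-cycle argument and show that the edges of the hull are exactly the pairs of vertices lying in opposite classes of the path's $2$-colouring. Label the vertices $1,\dots,n$ consecutively along $P_n$ and let $A$ and $B$ be the odd- and even-indexed vertices; since $P_n$ is connected and bipartite this is its unique proper $2$-colouring, with $|A|=\ceil{\frac{n}{2}}$ and $|B|=\floor{\frac{n}{2}}$. First I would dispose of same-class pairs. The colouring is a homomorphism $P_n\to K_2$, and as $P_n$ contains an edge we may embed $K_2$ back into $P_n$ along an adjacent pair; composing gives an endomorphism of rank $2$ that sends all of $A$ to one vertex and all of $B$ to an adjacent one. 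This single map collapses every pair of vertices of the same colour, so no such pair is an edge of $\Hull(P_n)$.

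The substantive step is the converse: I must show that no endomorphism collapses an opposite-colour pair, so that every cross-class pair \emph{is} an edge of the hull. The idea is a parity invariant. In $P_n$ the distance from $i$ to $j$ equals $|i-j|$, which is odd precisely when $i$ and $j$ receive different colours. Suppose some $f\in\End(P_n)$ had $f(u)=f(v)$ for vertices $u,v$ of opposite colour. A graph homomorphism carries a walk of length $\ell$ to a walk of length $\ell$, so applying $f$ to the $u$--$v$ path (of odd length) yields a closed walk of odd length based at $f(u)$. But $P_n$ is bipartite, and in a bipartite graph every closed walk has even length; this contradiction shows no such $f$ exists.

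Putting the two halves together, $u$ and $v$ are adjacent in $\Hull(P_n)$ if and only if they lie in different colour classes, which is exactly the complete bipartite graph on the parts $A$ and $B$, with sizes $\ceil{\frac{n}{2}}$ and $\floor{\frac{n}{2}}$ as claimed. I expect the only real obstacle to be stating the parity argument carefully: one must track that a homomorphism preserves walk lengths and, crucially, apply bipartiteness to the \emph{codomain} copy of $P_n$ rather than the domain. The hypothesis $n\ge 5$ plays no role in the reasoning and is kept only for uniformity with the preceding cycle lemma.
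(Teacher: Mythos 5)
Your proof is correct and follows the same route as the paper, whose entire proof is ``the same argument as for the even cycle holds'': use the $2$-colouring to build an endomorphism collapsing each colour class, so that same-class pairs are non-edges of the hull. You actually go further than the paper, which leaves the converse direction implicit; your parity argument (a homomorphism preserves walk length, and the bipartite codomain $P_n$ admits no odd closed walk) is precisely the missing justification that no endomorphism can identify an opposite-colour pair, and your observation that the hypothesis $n\ge 5$ plays no role is also accurate.
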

 \begin{proof}
  The same argument as for the even cycle holds.
 \end{proof}

%

\subsection{Hamming Graphs and related Graphs}

The Hamming graph $H(m,n)$ is the graph with vertex set $\mathbb{Z}_n^{m}$ where two vertices are adjacent if their Hamming distance is $1$.  Also, the Hamming graph is the cartesian graph product of $m$ copies of the complete graph $K_n$, namely, 
\[ K_n\sq K_n \sq \cdots \sq  K_n.\]
However, if we change the Hamming distance from $1$ to $n$, then the resulting graph is the categorial graph product of complete graphs, namely,
\[ K_n \times K_n \times\cdots \times  K_n.\]
In \cite{schaeferhamminggraph} the author investigated the singular endomorphims of both graphs and showed that they behave nicely. Moreover, he showed that the singular endomorphims of the Hamming graph admit a Latin square and Latin hypercube structure. Clearly, these graphs are hulls and so are their complements.

\begin{lemma}
 \begin{enumerate}
  \item The cartesian product $\sq_{i=1}^{n} K_r$ and its complement are hulls, for $n\geq 2$ and $r\geq 3$.
  \item The categorial product $\bigtimes\limits_{i=1}^{n} K_r$ and its complement are hulls, for $n\geq 2$ and $r\geq 3$.
 \end{enumerate}
\end{lemma}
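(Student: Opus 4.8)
The plan is to prove each statement by exhibiting the full endomorphism monoid, computing the kernel graph directly, and checking that adjacency in $\Gr(\End(X))$ coincides with adjacency in $X$. The key observation, which I would establish first, is that for a graph $X$ to be a hull it suffices to show that for every non-adjacent pair $\{v,w\}$ in $X$ there is a singular endomorphism $t\in\End(X)$ collapsing them (i.e.\ $vt=wt$), while every adjacent pair is preserved by all endomorphisms; since $X$ is always a spanning subgraph of $\Hull(X)$ by the earlier lemma, matching the non-edges forces $X=\Hull(X)$. So the whole argument reduces to a colouring-type statement: the categorial and cartesian products of complete graphs are rich enough in proper endomorphisms to collapse every non-edge.

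First I would treat the cartesian product $\square_{i=1}^{n}K_r = H(n,r)$, the Hamming graph, where two vertices are adjacent iff they differ in exactly one coordinate. Here I would invoke the structure of singular endomorphisms established in \cite{schaeferhamminggraph}: the proper endomorphisms come from Latin-square / Latin-hypercube constructions, and in particular for any two vertices $u,v$ that differ in at least two coordinates (a non-edge) one can build an endomorphism folding the cube along a suitable coordinate so that $u$ and $v$ land on the same image. The adjacent pairs differ in exactly one coordinate and cannot be collapsed, because a proper endomorphism preserves Hamming-distance-$1$ adjacency. Hence adjacency in the kernel graph is precisely ``differs in one coordinate,'' which is exactly $H(n,r)$, so the Hamming graph is a hull. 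For the categorial product $\bigtimes_{i=1}^{n}K_r$, two vertices are adjacent iff they differ in \emph{every} coordinate; the complementary distance condition means the relevant endomorphisms are the ones that identify vertices agreeing in at least one coordinate, and the same source \cite{schaeferhamminggraph} supplies these collapsing maps, again giving $X=\Gr(\End(X))$.

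For the complements, I would use the fact that $\Aut(X)\le\Aut(\Hull(X))$ together with an explicit collapsing map for each non-edge of $\overline{X}$. A non-edge of $\overline{X}$ is an edge of $X$; for the cartesian product a pair differing in exactly one coordinate, for the categorial product a pair differing in every coordinate. In each case the complement is itself a categorial/cartesian-type product (the complement of $\square K_r$ and $\times K_r$ again has a coordinatewise description), so the same Latin-hypercube endomorphisms, read in the complementary adjacency, collapse exactly the non-edges of $\overline{X}$ and preserve its edges. This makes $\overline{X}=\Hull(\overline{X})$ by the same non-edge-matching criterion.

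The main obstacle, and the place where I expect the real work to sit rather than in routine verification, is proving that \emph{adjacent} pairs cannot be collapsed — i.e.\ the lower bound direction showing $\Hull(X)\subseteq X$ rather than $X\subseteq\Hull(X)$. Producing collapsing maps for non-edges is straightforward once the Latin-hypercube endomorphisms are in hand, but ruling out a collapse of an edge requires knowing the clique number and that every endomorphism is a proper $r$-colouring respecting it. Concretely, one must argue that if $vt=wt$ for an edge $\{v,w\}$ then $t$ fails to be a graph homomorphism, which for the cartesian product follows from the fact that its clique number equals $r$ and every endomorphism maps a maximum clique isomorphically; for the categorial product the analogous control over maximum cliques is more delicate and is exactly where I would lean hardest on the hypotheses $n\ge 2$ and $r\ge 3$, since small $r$ degenerates the product. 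I would therefore cite the clique/chromatic equality from the kernel-graph lemma and the detailed endomorphism analysis of \cite{schaeferhamminggraph} to close this gap, keeping the proof itself short as the statement suggests (``Clearly, these graphs are hulls'').
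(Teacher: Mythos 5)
Your proposal is correct and follows essentially the same route as the paper, whose entire proof is the one-line appeal to the determination of the singular endomorphisms in \cite{schaeferhamminggraph}; your added reduction (match every non-edge with a collapsing endomorphism, since $X$ is already a spanning subgraph of $\Hull(X)$) is exactly the implicit content of that citation. One correction, though: you have the two containments backwards. The claim that adjacent pairs cannot be collapsed is not the ``main obstacle'' and needs no clique-number or colouring argument at all --- an endomorphism of a loopless graph can never identify the endpoints of an edge, which is precisely why $X$ is always a spanning subgraph of $\Hull(X)$ (i.e.\ it gives $X\subseteq\Hull(X)$, not $\Hull(X)\subseteq X$ as you write). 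The genuinely substantive direction is the one you dismiss as routine: producing, for \emph{every} non-edge, an endomorphism collapsing it, and that is exactly what the Latin-square/Latin-hypercube classification of \cite{schaeferhamminggraph} is cited to supply.
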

\begin{proof}
 The singular endomorphims were determined in \cite{schaeferhamminggraph}; so these results follow immediately.
\end{proof}

Another modification would be to consider cartesian and categorial products of other graphs. And surprisingly, our main example for graphs which are not hulls is given by the cartesian product of odd cycles
\[C_n\sq C_n \sq \cdots \sq C_n.\]
The case for two factors is given by the following example.

\begin{example}\label{examplenoncompletecore}
 The graph $X=C_n\sq C_n$, for odd $n\geq 5$, has satisfies the following. Its endomorphism monoid contains $8n^{2}$ singular transformations and is generated by the transformation $t$ corresponding to the Latin square $L$ and its automorphism group. Hence, it is a submonoid of the endomorphism monoid of $H(2,n)$, that is $\End(X)\leq \End(H(2,n))$, but its hull is $\Hull(X)=\overline{H(2,n)}$.
 \begin{equation*}
  L=\begin{pmatrix}
   1&2&3& \cdots &n-1&n\\
   n&1&2& \cdots &n-2&n-1 \\
   n-1&n&1& \cdots &n-3&n-2\\
   \vdots & \vdots &\vdots & \ddots & \vdots & \vdots \\
   2&3&4& \cdots &n&1
  \end{pmatrix}
 \end{equation*}
\end{example}

\subsection{Orthogonal Array Graphs and Latin Squares}

Orthogonal array graphs form another class of strongly regular graphs which contain the Hamming graphs $H(2,n)$ and their complements. These graphs come from orthogonal arrays with $n^2$ levels, $r$ factors, strength $2$ and index $1$. 

An \emph{orthogonal array}\index{orthogonal array} with $n$ levels, $r$ factors, of strength $t$ and index $\lambda$, i.e. a $t-(n,r,\lambda)$ orthogonal array, is a $r\times \lambda n^{t}$ array whose entries come from a set with $n$ elements such that in every subset of $t$ rows, every $t$-tuple appears in exactly $\lambda$ columns. In particular, $OA(r,n)$ denotes an orthogonal array with $t=2$ and $\lambda=1$. Moreover, an orthogonal array graph $L_r(n)$ is a graph whose vertices are the $n^{2}$ columns of $OA(r,n)$ where two vertices are adjacent, if the corresponding columns have the same entry in one of the coordinates. All we need to know about these graphs can be found in \cite{godsilbook}.

A \emph{Latin square}\index{Latin square} is an $n\times n$ array with entries from an $n$-element set, such that every row and every column contains each entry precisely once. Moreover, two Latin squares are \emph{mutually orthogonal}\index{mutually orthogonal Latin squares}, if after superimposition each of the $n^{2}$ distinct tuples occurs once. An $OA(3,n)$ orthogonal array represents a Latin square, where the three rows given row number, column number and symbol in each of the $n^2$ cells of the square (cf. Figure \ref{figureOAandLatinsquare}). In general, a set of $r-2$ mutually orthogonal Latin squares (MOLS) can be identified with an orthogonal array $OA(r,n)$, for $r\geq 3$.

\begin{figure}[t]
\begin{center}
\[
\begin{pmatrix}
  1&1&1&2&2&2&3&3&3\\
  1&2&3&1&2&3&1&2&3\\
  2&3&1&3&1&2&1&2&3\\
\end{pmatrix}\leftrightarrow
\begin{pmatrix}
 2&3&1\\
  3&1&2\\
  1&2&3\\
\end{pmatrix}
\]

 \end{center}
 \caption{Correspondence of OA(3,3) and a Latin square.}\label{figureOAandLatinsquare}
\end{figure}


By a result of Roberson \cite{davidroberson} the orthogonal array graphs are pseudo-cores. Moreover, it is well knon that an orthogonal array $OA(r,n)$ is extendible to $OA(r+1,n)$, if and only if $L_r(n)$ admits an $n$-colouring \cite[Thm 10.4.5]{godsilbook}. So, $L_r(n)$ admits singular endomorphims if and only if the corresponding orthogonal array is extendible. In some cases it is even possible to find a chain of extensions
\[OA(r,n) \rightarrow OA(r+1,n) \rightarrow \cdots \rightarrow OA(s-1,n) \rightarrow OA(s,n).\]

These $s-r$ extensions form a new orthogonal array $OA(s-r,n)$. If $s=n-1$ or $n-2$, then these extensions induce a complete set of $n-1$ MOLS. In that case, the orthogonal array graph $L_r(n)$ is a hull; however, the case where $OA(n,r)$ is not extendible to $OA(n-2,r)$ is unknown to the author. It follows:

\begin{lemma}
 Let $L_r(n)$ be an orthogonal array graph corresponding to a set of $r-2$ MOLS. If this set can be extended to a complete set of $n-1$ MOLS, then $L_r(n)$ is a hull.
\end{lemma}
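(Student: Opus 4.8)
The plan is to establish that $L_r(n)$ is a hull by showing directly that no non-edge is promoted to an edge in the hull. Recall from the lemmas above that $X=L_r(n)$ is always a spanning subgraph of $\Hull(X)=\Gr(\End(X))$, so it suffices to prove the reverse inclusion of edges: for every pair of vertices $v,w$ that are \emph{non-adjacent} in $L_r(n)$ I must exhibit an endomorphism $f\in\End(L_r(n))$ with $vf=wf$. Indeed, if such an $f$ exists for each non-edge, then $v,w$ non-adjacent in $X$ forces $v,w$ non-adjacent in $\Gr(\End(X))$, whence $X=\Hull(X)$.

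First I would extract the combinatorial content of the hypothesis. A set of $r-2$ MOLS is an $OA(r,n)$, so a complete set of $n-1$ MOLS is an $OA(n+1,n)$ containing our array as its first $r$ rows. Using only strength $2$ and index $1$, I would prove that any two distinct columns of an $OA(n+1,n)$ agree in \emph{exactly one} coordinate. Two columns cannot agree in two coordinates, since the corresponding ordered pair would then appear twice; and for a fixed column $u$, each coordinate $i$ contributes exactly $n-1$ other columns agreeing with $u$ in coordinate $i$, and these $n+1$ sets are pairwise disjoint, giving $(n+1)(n-1)=n^2-1$ columns, i.e.\ all columns other than $u$. Hence every other column agrees with $u$ in precisely one coordinate. (Equivalently, $OA(n+1,n)$ is an affine plane of order $n$ whose parallel classes are the coordinates.)

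Now take a non-edge $\{v,w\}$ of $L_r(n)$; by definition $v$ and $w$ disagree in all of the first $r$ coordinates. By the previous step they agree in exactly one coordinate overall, say $j$ with $r<j\le n+1$. The crucial observation is that the $j$-th row of the full array is a proper $n$-colouring of $L_r(n)$: since rows $j$ and $i$ are orthogonal for every $i\le r$, two columns sharing an entry in coordinate $i$ must differ in coordinate $j$, so adjacent vertices of $L_r(n)$ receive distinct colours. This realises coordinate $j$ as a homomorphism $c_j\colon L_r(n)\to K_n$ under which $v$ and $w$ receive the \emph{same} colour because they share coordinate $j$.

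Finally I would turn this colouring into an endomorphism. The $n$ columns sharing a fixed entry in coordinate $1$ are pairwise adjacent and so form a clique of size $n$, yielding a homomorphism $\iota\colon K_n\to L_r(n)$ onto that clique; composing,
\[L_r(n)\rightarrow K_n\rightarrow L_r(n),\]
gives a singular endomorphism $f$ of rank $n$ with $vf=wf$, since $v$ and $w$ agree under $c_j$. As $\{v,w\}$ was an arbitrary non-edge, this supplies the required collapsing map in every case and shows $L_r(n)=\Hull(L_r(n))$. I expect the one genuinely substantive step to be the claim that distinct columns of a complete $OA(n+1,n)$ meet in exactly one coordinate: it is precisely this fact that forces the shared coordinate of any non-edge to lie among the extra rows $r+1,\dots,n+1$, thereby guaranteeing an available colouring; everything afterwards is routine composition of homomorphisms.
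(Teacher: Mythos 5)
Your proposal is correct and follows essentially the same route as the paper's proof: for each non-edge, locate the unique extra coordinate (Latin square) of the completed $OA(n+1,n)$ in which the two columns agree, and use the induced $n$-colouring composed with a clique embedding to produce an endomorphism collapsing them. The paper asserts the existence of the shared Latin square directly, whereas you justify it with the counting argument showing distinct columns of an $OA(n+1,n)$ agree in exactly one coordinate; this fills in the one step the paper leaves implicit.
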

\begin{proof}
 Let $v$ and $w$ two vertices of $L_r(n)$. Then, none of the $r-2$ Latin squares included in the orthogonal array has the same entry in the position of $v$ and $w$. However, since this set can be extended to a complete set of $n-1$ MOLS, there is one Latin square having the same entry in these positions. Thus, there is an endomorphism, induced by this Latin square, collapsing $v$ and $w$.
\end{proof}

\begin{corollary}
 Let $L_r(n)$ be an orthogonal array graph given by the desarguesian plane construction. Then, this graph is a hull.
\end{corollary}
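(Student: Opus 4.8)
The plan is to reduce this corollary directly to the preceding lemma by showing that the desarguesian construction always yields a set of MOLS that sits inside a complete set of $n-1$ MOLS. The key classical fact is that a desarguesian plane of order $n$ exists exactly when $n=q$ is a prime power, and in that case the field $\F_q$ supplies a complete set of $q-1=n-1$ mutually orthogonal Latin squares.

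First I would recall the standard field construction: index the rows and columns of each square by the elements of $\F_q$, and for each nonzero $a\in\F_q$ define the Latin square $L_a$ whose entry in position $(i,j)$ is $ai+j$. A short check confirms that each $L_a$ is a Latin square (every row and every column is a bijection of $\F_q$) and that $L_a$ and $L_b$ are orthogonal whenever $a\neq b$, so the family $\{L_a\}$ consists of $n-1$ MOLS. Since the number of MOLS of order $n$ is at most $n-1$, this is a \emph{complete} set.

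Second, I would identify the desarguesian $L_r(n)$ with a sub-array of the orthogonal array built from this complete set. By the desarguesian construction, the $r-2$ Latin squares defining $L_r(n)$ are precisely $r-2$ of the squares $L_a$; equivalently, the defining $OA(r,n)$ is obtained by deleting rows from the $OA(n+1,n)$ coming from all of $\F_q$. Hence the defining set of $r-2$ MOLS extends to the complete set of $n-1$ MOLS. Applying the previous lemma, which asserts that any $L_r(n)$ whose defining MOLS extend to a complete set of $n-1$ MOLS is a hull, then yields the claim at once.

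The only point requiring care -- and the nearest thing to an obstacle -- is matching the paper's phrase ``desarguesian plane construction'' with the field construction above and fixing the indexing conventions, so that the desarguesian $L_r(n)$ is genuinely realised as a sub-array of the complete field-theoretic array. Once this identification is pinned down, no further computation is needed and the corollary follows immediately from the lemma.
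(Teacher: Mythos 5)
Your argument is correct and matches the route the paper intends: the corollary is stated without proof precisely because the desarguesian (field) construction supplies a complete set of $n-1$ MOLS containing the $r-2$ squares that define $L_r(n)$, so the preceding lemma applies directly. Your explicit verification of the field construction $L_a(i,j)=ai+j$ and the identification of the defining $OA(r,n)$ as a sub-array of the full $OA(n+1,n)$ is exactly the content the paper leaves implicit.
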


\subsection{Small Primitive Graphs}
 
  In \cite{araujo15}, a complete list of small primitive graphs with singular endomorphisms on less than $45$ vertices is given. However, checking the graphs with $45$ vertices, we were able to determine that all graphs with less than $46$ vertices are hulls, except for one. This one graph has $25$ vertices and is the cartesian product of two $5$-cycles, namely $C_5\sq C_5$. We have already mentioned this construction above. Note, there are no primitive graphs on $46,47$ or $48$ vertices having singular endomorphisms; hence, it follows:
  
  \begin{theorem}
   All primitive graphs with singular endomorphisms and less than $49$ vertices, except for $C_5\sq C_5$, are hulls.
  \end{theorem}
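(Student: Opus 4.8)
The plan is to treat this as a finite verification, reducing the hull property to a checkable combinatorial condition and then combining an existing classification, a direct computation, and a non-existence argument. The key reduction is the following criterion: since $X$ is always a spanning subgraph of $\Hull(X)$, the two graphs can differ only in that $\Hull(X)$ may contain extra edges, and by the construction a non-edge $\{v,w\}$ of $X$ becomes an edge of $\Hull(X)$ precisely when no endomorphism of $X$ collapses $v$ and $w$. Hence $X$ is a hull if and only if every non-adjacent pair of vertices is identified by some $f\in\End(X)$. This turns the question into a search over endomorphisms, and one need not compute the whole monoid: it suffices to exhibit, for each non-edge, a single collapsing endomorphism.

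First I would invoke the complete list of primitive graphs with singular endomorphisms on fewer than $45$ vertices provided in \cite{araujo15}. For each graph $X$ in that list I would compute a generating set of $\End(X)$ (for instance the automorphism group together with the idempotent endomorphisms of minimal rank, which by the discussion in the introduction already realise all collapsible pairs), and then apply the criterion above, checking that every non-edge is collapsed. Apart from $C_5\sq C_5$, each of these graphs passes the test; for $C_5\sq C_5$ itself, Example \ref{examplenoncompletecore} already shows $\Hull(C_5\sq C_5)=\overline{H(2,5)}\neq C_5\sq C_5$, so it is the unique exception below $45$ vertices.

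Next I would extend the check to the primitive graphs on exactly $45$ vertices, which are not covered by \cite{araujo15}. To do this I would enumerate the primitive permutation groups of degree $45$, form their orbital graphs, discard the trivial (complete and null) graphs, test the remaining candidates for singular endomorphisms, and subject any that admit them to the hull criterion. Finally, for degrees $46$, $47$ and $48$ one argues that no non-trivial primitive graph admits singular endomorphisms at all — equivalently, the non-trivial orbital graphs of primitive groups of these degrees are cores, or the groups themselves are synchronizing — so these degrees contribute nothing. Assembling the three ranges yields the stated result.

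The main obstacle will be the computation at degree $45$, and the heavier instances below it: confirming the existence of a collapsing endomorphism for every non-edge is expensive on graphs of this size, and one must exploit the primitivity and the large automorphism group to cut down the search rather than enumerating maps naively. Verifying the $46$--$48$ non-existence claim is the other delicate point, since it rests on the classification of primitive groups of those degrees together with the observation that their orbital graphs are either trivial or admit no proper endomorphisms; making this fully rigorous requires care, although the number of groups involved is small.
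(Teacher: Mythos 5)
Your proposal matches the paper's argument: the paper likewise combines the classification of primitive graphs with singular endomorphisms on fewer than $45$ vertices from \cite{araujo15}, a direct check of the degree-$45$ cases, the observation that no primitive graphs on $46$, $47$ or $48$ vertices admit singular endomorphisms, and a computational verification that each listed graph equals its hull, with $C_5\sq C_5$ as the sole exception (Example \ref{examplenoncompletecore}). Your reduction of the hull property to collapsing every non-edge, and the use of minimal-rank idempotents to organise the check, are consistent with the generating-set results of Section \ref{section5}.
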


\section{Generating Sets for $\Gr(S)$}\label{section5}

\subsection{Basic Results on Generating Sets}
When determining the examples form the previous section it was mostly argued that a graph is a hull, because all its singular endomorphisms are known. However, in this section we are interested deciding whether a graph is a hull by considering only a few singular endomorphisms, not all of them. That is, we would like to have generating sets for the kernel graph $\Gr (S)$, where $S$ is a semigroup. Mostly, there is no need to determine all endomorphisms, and the restriction to generating sets simplifies computations dramatically.

So, the question tackled in this section is: what are generators of the hull of $\Gamma$? Or more precisely, can we find a subset $S\subseteq \End(\Gamma)$ with $\Hull(\Gamma)=\Gr (S)$? The result is that we can choose a generating set which forms a left-zero semigroup (see Theorem \ref{theoremongeneratingsubsemigroup}). 

Recall, two vertices $v$ and $w$ in $\Gr (S)$ are adjacent, if there is no transformation $f\in S$ having $v$ and $w$ in the same kernel class. So, in fact, this construction is all about the kernels and we obtain the following observation, immediately. 

\begin{lemma}
 Let $S$ be a semigroup and $f_1,...,f_n$ representatives of its $R$-classes. Then, $\Gr (S)=\Gr (\{f_1,...,f_n\})$.
\end{lemma}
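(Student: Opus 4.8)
The plan is to reduce the construction of the kernel graph to a statement purely about the \emph{set of kernels} realised by the semigroup, and then to show that passing to a transversal of the $R$-classes leaves this set unchanged.

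First I would record the elementary observation that adjacency in $\Gr(S)$ is determined entirely by the family of partitions $\{\ker f : f \in S\}$, where $\ker f = \{(v,w) : vf = wf\}$. By definition $v$ and $w$ fail to be adjacent precisely when some $f \in S$ places them in a common kernel class, that is, when $(v,w) \in \ker f$ for some $f \in S$. Hence any two subsets of $T_n$ realising the same collection of kernels produce identical kernel graphs. This shifts the burden onto comparing the kernels attained by $S$ with those attained by $\{f_1,\dots,f_n\}$.

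The heart of the argument is the claim that $R$-related elements of $S$ share the same kernel. With the right-action convention one always has $\ker f \subseteq \ker(fs)$ for every $s \in S^1$, since $vf = wf$ forces $(vf)s = (wf)s$, i.e.\ $v(fs) = w(fs)$; so composing on the right can only coarsen the kernel. If $f \mathrel{R} g$ in $S$, then $g = fs$ and $f = gt$ for suitable $s,t \in S^1$, and applying this inclusion in both directions gives $\ker f \subseteq \ker g \subseteq \ker f$, whence $\ker f = \ker g$. Consequently every kernel realised in $S$ already occurs among the chosen representatives: an arbitrary $f \in S$ lies in the $R$-class of some $f_i$, so $\ker f = \ker f_i$.

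Combining these, the two families $\{\ker f : f \in S\}$ and $\{\ker f_i : 1 \le i \le n\}$ coincide — one inclusion is immediate because each $f_i \in S$, and the other is exactly the claim just proved — and by the first observation the associated kernel graphs agree, giving $\Gr(S) = \Gr(\{f_1,\dots,f_n\})$. I expect the only genuinely substantive point to be the kernel-equality for $R$-related elements; since the inclusion $\ker f \subseteq \ker(fs)$ is automatic, the work reduces to invoking the two-sided definition of Green's $R$ to obtain the reverse containment, and no property of $S$ beyond closure under composition is required.
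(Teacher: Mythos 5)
Your proof is correct and supplies exactly the reasoning the paper leaves implicit (the lemma is stated there as an ``immediate observation'' with no written proof): adjacency in $\Gr(S)$ depends only on the family $\{\ker f : f\in S\}$, and $R$-related transformations have equal kernels because $\ker f \subseteq \ker(fs)$ holds in both directions. Nothing further is needed.
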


This result is one of the most important ones, regarding generating sets for $\Gr (S)$ and $\Hull(\Gamma)$, since it reduces the the number of generators to a generally much smaller set $\{f_1,...,f_n\}$. Another interesting result comes from an observation of non-synchronizing semigroups with non-trivial group of units $G$. The elements of minimal rank play an important role, since they have many important properties, and it turns out that these are sufficient to generate $\Gr (S)$. Note, the set of elements of minimal rank form the minimal ideal $I$ of $S$.

\begin{lemma}
 Let $S$ be a (non-synchronizing) semigroup of singular transformations and $I$ its minimal ideal, then $\Gr (I)=\Gr (S)$.
\end{lemma}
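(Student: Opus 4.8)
The plan is to prove that the two kernel graphs have identical edge sets by comparing, pair by pair, the collapsing maps available in $I$ and in $S$. One inclusion is immediate: since $I\leq S$, every map in $I$ is a map in $S$, so any pair $\{v,w\}$ collapsed by some $g\in I$ is collapsed by an element of $S$. Hence every non-edge of $\Gr(I)$ is a non-edge of $\Gr(S)$, equivalently $E(\Gr(S))\subseteq E(\Gr(I))$. The real content is the reverse inclusion, namely that a pair collapsed by \emph{some} element of $S$ is already collapsed by an element of the minimal ideal.

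For that direction I would use the absorbing property of $I$ together with the elementary fact that right multiplication only coarsens a kernel. Recall that $S\leq T_n$ is finite, so its minimal two-sided ideal $I$ is nonempty and, as noted above, consists exactly of the elements of least rank; in particular $SI\subseteq I$. Suppose $v$ and $w$ are non-adjacent in $\Gr(S)$, so there is $f\in S$ with $vf=wf$. Fix any $h\in I$ and set $g=fh$. Then $g\in I$ because $I$ is an ideal, and since $\ker f\subseteq\ker(fh)$ we get
\[
vg=(vf)h=(wf)h=wg.
\]
Thus $g\in I$ collapses $v$ and $w$, so the pair is non-adjacent in $\Gr(I)$ as well, giving $E(\Gr(I))\subseteq E(\Gr(S))$.

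Combining the two inclusions yields $\Gr(I)=\Gr(S)$. The only step requiring care is the structural input that $I$ is a two-sided ideal, which is exactly what guarantees $g=fh\in I$; everything else is the one-line observation that multiplying on the right merges kernel classes rather than separating them. I expect no genuine obstacle here, and indeed the hypothesis that $S$ is non-synchronizing is inessential to the argument: if $S$ happened to synchronize, then $I$ would consist of rank-$1$ maps collapsing every pair, and both graphs would simply be the null graph.
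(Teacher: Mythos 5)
Your proof is correct and follows essentially the same route as the paper's: the key observation in both is that right multiplication only coarsens kernels, so any collapsing map $f\in S$ can be pushed into the minimal ideal via $g=fh$ with $h\in I$ while still collapsing the same pair. Your write-up is in fact slightly more explicit than the paper's (which phrases the step contrapositively and leaves the choice of $h$ implicit), but there is no substantive difference.
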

\begin{proof}
 Let $f_2=f_1t$ (right action), where $f_1,f_2\in T_n\setminus S_n$ and $t\in T_n$ are transformations, and let $n>rank(f_1)>rank(f_2)$.
If the transformation $f_1$ collapses the vertices $v$ and $w$, then so does $f_2$. Hence, if there is no such transformation $f_2$, then there is no such transformation $f_1$. Thus, it is enough to check the minimal ideal for adjacency.
\end{proof}

Assuming that $S$ has a transitive group of units, the transformations in $I$ need to be uniform \cite{neumann}. Hence, in these cases the construction of $\Gr(S)$ is based on a set of uniform partition. 

Note, the minimal ideal $I$ is a simple semigroup and a completely regular one \cite{howie}. Thus, every $H$-class of $I$ forms a group where its unique idempotent acts as the identity. This fact provides very interesting generating sets (at least for semigroup theorists).

\begin{corollary}\label{corminimalidealandidempotents}
 \begin{enumerate}
  \item Let $I$ be the minimal ideal of $S$ and $e_1,...,e_n$ representatives of the $R$-classes of $I$. Then, $\Gr(S)=\Gr(\{e_1,...,e_n\})$.
  \item The set of idempotents of $I$, namely $E(I)$, generates $\Gr(S)$.
  \item The set of idempotents of $S$ (here $E(S)$ respectively) generates $\Gr(S)$.
 \end{enumerate}
\end{corollary}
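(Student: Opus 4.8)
The plan is to derive all three parts from a single monotonicity principle together with the two lemmas already established, namely that $\Gr(I)=\Gr(S)$ for the minimal ideal $I$, and that $\Gr$ of a semigroup equals $\Gr$ of any set of $R$-class representatives. The monotonicity principle I would record first is: if $A\subseteq B\subseteq T_n$, then every edge of $\Gr(B)$ is an edge of $\Gr(A)$. This is immediate from the definition, since a pair $v,w$ adjacent in $\Gr(B)$ means that no transformation in $B$ collapses them, which a fortiori means that no transformation in the smaller set $A$ collapses them; hence $v,w$ is also an edge of $\Gr(A)$. Informally, enlarging the set of transformations can only delete edges.

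For part (1), I would apply the earlier lemma directly to the subsemigroup $I$, which is legitimate since the minimal ideal is itself a semigroup. Because $I$ is completely regular, every $R$-class of $I$ contains an idempotent, so I may take the $R$-class representatives to be idempotents $e_1,\dots,e_n\in E(I)$. The lemma then gives $\Gr(I)=\Gr(\{e_1,\dots,e_n\})$, and combining this with $\Gr(I)=\Gr(S)$ yields $\Gr(S)=\Gr(\{e_1,\dots,e_n\})$.

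For parts (2) and (3), I would run a squeeze along the chain of inclusions $\{e_1,\dots,e_n\}\subseteq E(I)\subseteq E(S)\subseteq S$, where $E(I)\subseteq E(S)$ holds because any idempotent of $I$ is in particular an idempotent of $S$. Applying monotonicity to $\{e_1,\dots,e_n\}\subseteq E(I)\subseteq S$ sandwiches $\Gr(E(I))$ between $\Gr(S)$ and $\Gr(\{e_1,\dots,e_n\})$; since part (1) identifies these two outer graphs, we get $\Gr(E(I))=\Gr(S)$, which is (2). Likewise, applying monotonicity to $E(I)\subseteq E(S)\subseteq S$ sandwiches $\Gr(E(S))$ between $\Gr(S)$ and $\Gr(E(I))=\Gr(S)$, which is (3).

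The only substantive structural input is the complete regularity of $I$, which guarantees that the $R$-class representatives in part (1) can be chosen inside $E(I)$; this is precisely what validates the first inclusion in the squeeze. I do not anticipate any genuine obstacle: once the direction of monotonicity is fixed (larger set, fewer edges), every remaining step is a formal inclusion argument, and the single point requiring care is to ensure the representatives $e_i$ really are idempotents, so that $\{e_1,\dots,e_n\}\subseteq E(I)$ holds and the sandwich closes.
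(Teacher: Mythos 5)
Your proposal is correct and follows essentially the same route as the paper: part (1) is the combination of the two preceding lemmas, and parts (2) and (3) follow because complete regularity of $I$ puts an idempotent in every $R$-class, so $E(I)$ (and hence $E(S)$) contains a full set of representatives. Your explicit monotonicity/sandwich argument merely spells out what the paper leaves implicit.
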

\begin{proof}
 The first result is a combination of the previous two. From this, the second follows, since the $H$-class cover the $R$-classes and, therefore, the idempotents contain a set of representatives from each $R$-class. The third result follows from the second.
\end{proof}

Combining these results, we are able to pick the right generators and chose any generating set we like. For instance, we can chose the generating set to be a left-zero semigroup.

\begin{theorem}\label{theoremongeneratingsubsemigroup}
 Let $S$ be a transformation semigroup with graph $\Gr(S)$. Then, we can find a left-zero semigroup $S'$ such that $\Gr(S)=\Gr(S')$. Moreover, the transformations of $S'$ are of minimal rank in $S$.
\end{theorem}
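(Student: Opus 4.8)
The plan is to exhibit the required left-zero semigroup inside the minimal ideal $I$ of $S$, combining the structure theory of completely simple semigroups with Corollary \ref{corminimalidealandidempotents}. As noted in the excerpt, $I$ is simple and completely regular, hence completely simple, so it admits a Rees matrix description $I \cong M[H;\mathcal{I},\Lambda;P]$: its elements partition into $R$-classes indexed by $\mathcal{I}$ and $L$-classes indexed by $\Lambda$, each $\mathcal{H}$-class sitting at an intersection of an $R$-class and an $L$-class is a group, and every such group contains exactly one idempotent. The whole point of passing to $I$ is that, by Corollary \ref{corminimalidealandidempotents}(1), a single transformation chosen from each $R$-class of $I$ already generates $\Gr(S)$ — this is ultimately because $\Gr$ depends only on the kernels of its generators, and in $T_n$ two transformations are $R$-related precisely when they have the same kernel.

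First I would fix one $L$-class $L_\lambda$ of $I$ and take $S'$ to be the set of idempotents lying in $L_\lambda$. Writing these idempotents in Rees coordinates as $e=(i,p_{\lambda i}^{-1},\lambda)$ and $f=(j,p_{\lambda j}^{-1},\lambda)$, a direct multiplication gives $ef=e$ and $fe=f$, so $S'$ satisfies $xy=x$ and is a left-zero semigroup. Moreover, since $L_\lambda$ meets each $R$-class of $I$ in exactly one group $\mathcal{H}$-class, and each such $\mathcal{H}$-class contains a unique idempotent, the set $S'$ contains precisely one idempotent per $R$-class of $I$. In other words, $S'$ is simultaneously a left-zero band and a complete system of $R$-class representatives of $I$.

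With this in hand, the theorem follows quickly. Applying Corollary \ref{corminimalidealandidempotents}(1) to the $R$-class representatives $S'$ yields $\Gr(S')=\Gr(I)=\Gr(S)$, which is the required equality of kernel graphs. For the ``moreover'' clause, every element of $S'$ lies in the minimal ideal $I$, and the elements of $I$ are exactly the transformations of minimal rank in $S$, so each element of $S'$ has minimal rank.

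The main obstacle is the structural bookkeeping inside $I$: one must confirm that the idempotents of a common $L$-class both form a left-zero band and exhaust the $R$-classes. Both facts drop out immediately from the Rees matrix representation, so once that is invoked the argument is routine; the only additional care I would take is to check the degenerate cases, namely when $S$ is synchronizing (so $I$ consists of rank-$1$ maps and $S'$ may reduce to a single idempotent, giving the null graph) and when $S$ is a permutation group (so $I=S$ and $S'$ is the identity, giving the complete graph). In each case $S'$ is trivially a left-zero semigroup and reproduces the correct $\Gr(S)$.
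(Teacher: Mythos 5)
Your proposal is correct and follows essentially the same route as the paper: both pick the idempotents lying in a single $L$-class of the minimal ideal $I$, observe that they meet every $R$-class and hence generate $\Gr(S)$ by Corollary \ref{corminimalidealandidempotents}, and note they form a left-zero band of minimal-rank elements. Your write-up merely makes explicit, via the Rees matrix coordinates, the verification that the paper dismisses as "easily checked."
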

\begin{proof}
 As we have seen, the idempotents in $E(I)$ generate $\Gr(S)$. Pick an $L$-class $l$ in the minimal ideal $I$ of $S$. Then, the idempotents in $S'=E\cap l$ cover each $R$-class; hence this subset of idempotents generates $\Gr(S)$. However, this set $S'$ forms a left-zero semigroup, as can easily be checked. 
\end{proof}

Now, given that the first results on generating sets are established, they are applied to analyse semigroups of some particular types. In abstract semigroup theory many types of semigroups are common, for instance ``simple semigroups'', ``regular semigroups'' or even ``inverse semigroups''. Here, the following 4 kinds of semigroups are considered: monogenic semigroups, bands, semilattices and left-zero semigroups (right-zero semigroups, respectively). If $S$ is a semigroup of one of these types with an arbitrary transformation representation, then we want to know the generating set for $\Gr (S)$?

\paragraph{Monogenic Semigroups} A monogenic semigroup is the equivalent to a cyclic group in group theory. Here, the semigroup $S$ is generated by a single transformation $a$, namely $S=\langle a\rangle$, where $a^{m}=a^{m+r}$ for minimal non-negative integers $m$ and $r$. The integer $m$ is the index and $r$ is the period.

\begin{lemma}
 $\Gr (S)=\Gr (\{a^{m}\})$.
\end{lemma}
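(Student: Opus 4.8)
The plan is to show that among all the powers $a^{k}$ the element $a^{m}$ has the largest kernel, so that any pair of vertices collapsed by \emph{some} element of $S$ is already collapsed by $a^{m}$; since $a^{m}\in S$, the two graphs then have exactly the same edge set. First I would record that kernels can only grow under powering: if $va^{k}=wa^{k}$ then $va^{k+1}=(va^{k})a=(wa^{k})a=wa^{k+1}$, so $\ker a^{k}\subseteq\ker a^{k+1}$ for every $k$. This yields an ascending chain
\[\ker a\subseteq \ker a^{2}\subseteq\cdots\subseteq \ker a^{m}\subseteq \ker a^{m+1}\subseteq\cdots.\]

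The second step is to close the chain off using the period relation $a^{m}=a^{m+r}$. It gives $\ker a^{m}=\ker a^{m+r}$, and combined with the ascending chain this forces the chain to stabilise exactly at exponent $m$:
\[\ker a^{m}=\ker a^{m+1}=\cdots=\ker a^{m+r}.\]
Consequently $\ker a^{k}\subseteq\ker a^{m}$ for \emph{every} $k\ge 1$: for $k\ge m$ one has equality, and for $k<m$ the inclusion is already part of the chain. Hence if any $a^{k}$ collapses two vertices $v,w$ then so does $a^{m}$, while conversely $a^{m}$ itself lies in $S$. Therefore the set of non-collapsible (adjacent) pairs is the same for $S$ and for $\{a^{m}\}$, which is precisely the assertion $\Gr(S)=\Gr(\{a^{m}\})$.

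I expect the only delicate points to be the boundary and degenerate cases rather than any real difficulty. If $a$ is a permutation then $m=0$, $a^{0}$ is the identity, and both graphs are the complete graph, so the statement holds trivially; this should be flagged separately since the ascending-chain argument is naturally phrased for singular $a$. It is also worth noting the conceptual reason behind the computation: the set $\{a^{m},\ldots,a^{m+r-1}\}$ is exactly the minimal ideal $I$ of $S$, which is a cyclic group and hence a single $R$-class, so all its members share one kernel. In this light the lemma is just the monogenic instance of the earlier reduction $\Gr(S)=\Gr(I)$ together with the passage to a single $R$-class representative (Corollary \ref{corminimalidealandidempotents}), and the whole content reduces to the harmless observation that $a^{m}\in I$, i.e. that the minimal ideal of a monogenic semigroup is its cyclic part.
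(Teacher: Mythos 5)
Your argument is correct. The kernel-monotonicity observation ($va^{k}=wa^{k}$ implies $va^{k+1}=wa^{k+1}$), the stabilisation of the chain forced by $a^{m}=a^{m+r}$, and the conclusion that $\ker a^{k}\subseteq\ker a^{m}$ for all $k\geq 1$ together give exactly what is needed, and the converse direction is immediate since $a^{m}\in S$.

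The paper disposes of this in one line by citing the general machinery it has already built: the minimal ideal of $S$ is $I=\{a^{m},\dots,a^{m+r-1}\}$, it consists of a single $R$-class, and a single $R$-class representative of $I$ suffices to generate $\Gr(S)$ (the earlier lemma $\Gr(I)=\Gr(S)$ together with Corollary \ref{corminimalidealandidempotents}). Your main argument instead unpacks that machinery into a self-contained elementary computation — your ascending chain of kernels is precisely the content of the lemma $\Gr(I)=\Gr(S)$, and the stabilised segment $\ker a^{m}=\cdots=\ker a^{m+r}$ is the statement that the cyclic part is one $R$-class. You then observe this connection explicitly in your closing paragraph, so the two proofs are really the same proof at different levels of abstraction; yours has the advantage of being verifiable without the earlier results, the paper's of being a one-line corollary. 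The only nitpick is your degenerate case: for a permutation $a$ of order $r$ the standard convention gives index $m=1$ (the monogenic semigroup $\langle a\rangle=\{a,a^{2},\dots\}$ need not contain $a^{0}$ a priori), but since every power of a permutation is injective both graphs are complete regardless, so nothing breaks.
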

\begin{proof}
 The minimal ideal of $S$ is $I=\{a^{m},...,a^{m+r-1}\}$, where $I$ has only one $R$-class. The result follows from Theorem \ref{theoremongeneratingsubsemigroup}.
\end{proof}

\paragraph{Bands} A band is a semigroup $S$ where every element is an idempotent; that is, $a^2=a$, for all $a\in S$.

\begin{lemma}
 Let $S$ be a band and $I$ its minimal ideal. Further, let $b_1,...,b_s$ be a generating set for $I$. Then, $\Gr (S)=\Gr (\{b_1,...,b_s\})$.
\end{lemma}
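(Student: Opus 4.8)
The plan is to reduce everything to the minimal ideal and then exploit the fact that $I$, being the minimal ideal of a band, is a rectangular band. The starting point is Corollary~\ref{corminimalidealandidempotents}(1), which already gives $\Gr(S)=\Gr(\{e_1,\dots,e_n\})$ for any choice of representatives $e_1,\dots,e_n$ of the $R$-classes of $I$. Since the kernel graph of a set of transformations depends only on the collection of their kernels (two vertices are adjacent precisely when no member of the set places them in a common kernel class), it suffices to show that the generators $b_1,\dots,b_s$ realise exactly the same set of kernels as the representatives $e_1,\dots,e_n$, that is, $\{\ker b_j\}=\{\ker f : f\in I\}$.

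First I would record the standard fact that in any transformation semigroup $R$-related elements have equal kernel: if $f=ga$, then $vg=wg$ implies $vga=wga$, so $\ker g\subseteq\ker f$, and symmetry gives $\ker f=\ker g$. Hence the kernel is constant on each $R$-class of $I$, and $\{\ker e_1,\dots,\ker e_n\}$ is precisely the set of all kernels occurring in $I$.

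Next I would identify the structure of $I$. As a subsemigroup of the band $S$, $I$ consists of idempotents, so it is itself a band; being the minimal ideal of a finite semigroup it is completely simple, and a completely simple band is a rectangular band. Writing $I\cong L\times R$ with multiplication $(l_1,r_1)(l_2,r_2)=(l_1,r_2)$, the $R$-classes are exactly the rows $\{l\}\times R$. The key point is that multiplication preserves the first coordinate, so for any generating set $B$ one has $\langle B\rangle=L_B\times R_B$, where $L_B$ and $R_B$ are the two coordinate projections of $B$. Since $\langle B\rangle=I=L\times R$ forces $L_B=L$, every row --- hence every $R$-class of $I$ --- contains at least one generator $b_j$.

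Finally I would assemble the pieces. Because every $R$-class contains a generator and the kernel is constant on $R$-classes, every kernel occurring in $I$ equals $\ker b_j$ for some $j$; conversely $\{b_1,\dots,b_s\}\subseteq I$ yields the reverse inclusion of kernel sets. Thus $\{\ker b_j\}=\{\ker f:f\in I\}=\{\ker e_i\}$, and since the kernel graph depends only on this set, $\Gr(\{b_1,\dots,b_s\})=\Gr(\{e_1,\dots,e_n\})=\Gr(S)$. The main obstacle is the middle step: recognising $I$ as a rectangular band and observing that, because the left factor's $R$-class index is preserved under multiplication, a generating set is forced to meet every $R$-class. Everything else is the routine kernel bookkeeping already used for Corollary~\ref{corminimalidealandidempotents}.
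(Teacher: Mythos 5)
Your proof is correct, but the decisive step is carried out by a different route than the paper's. Both arguments reduce to the same task: show that $b_1,\dots,b_s$ realise every kernel occurring in the minimal ideal $I$, after which the reduction $\Gr(S)=\Gr(I)$ (Corollary~\ref{corminimalidealandidempotents}) finishes the job. The paper does this in one elementary line: any $x\in I$ is a word $b_iw$ in the generators, the right action gives $\ker b_i\subseteq\ker x$, and since $b_i$ and $x$ both lie in $I$ they have the same minimal rank, forcing $\ker x=\ker b_i$. You instead invoke structure theory: $I$ is a completely simple band, hence a rectangular band $L\times R$, multiplication preserves the first coordinate, so any generating set must meet every row (every $R$-class), and kernels are constant on $R$-classes. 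The two routes establish the same underlying fact --- the kernel of a product equals the kernel of its first letter --- but yours makes explicit that every $R$-class of $I$ contains a generator, something the paper's proof leaves implicit, at the price of appealing to the Rees-matrix classification of completely simple bands and to Green's relations on $I$. The paper's rank argument is lighter and avoids both. Your proof is complete and valid as written; if you wanted to trim it, the rectangular-band detour can be replaced by the observation that $\ker b_{i_1}\subseteq\ker(b_{i_1}\cdots b_{i_k})$ together with equality of ranks inside $I$.
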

\begin{proof}
 An element $x\in I$ is a word in the generators $b_1,...,b_s$. Thus, if the word starts with $b_i$, then $x$ and $b_i$ need to have the same kernel, as they already have the same rank. Hence, $b_1,...,b_s$ generate $\Gr (S)$.
\end{proof}

\paragraph{Semilattices} A semilattice is a semigroup which is a commutative band. Thus, we have $a^2=a$ and $ab=ba$, for all $a,b\in S$.

\begin{lemma}
 Let $S$ be a band and $I$ its minimal ideal. Further, let $b_1,...,b_s$ be a generating set for $I$. Then, $\Gr (S)=\Gr (\{b_1\})$.
\end{lemma}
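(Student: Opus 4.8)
The plan is to read the statement in the context of its subsection, where $S$ is a semilattice (a commutative band), and to obtain the single-generator reduction by pinning down the minimal ideal $I$ exactly. The immediately preceding Band lemma already supplies $\Gr(S)=\Gr(\{b_1,\dots,b_s\})$, so all that the present lemma adds is the collapse of the generating set $\{b_1,\dots,b_s\}$ down to the one element $b_1$. I would obtain this collapse not by fiat but by using the structure of $I$ together with the commutativity hypothesis, which (as the argument will show) is exactly the feature doing the work.

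First I would record the general structure of the kernel. For any finite band $S$ the minimal ideal $I$ is completely simple and consists entirely of idempotents, hence is a rectangular band; one may write it as $L\times R$ with multiplication $(l_1,r_1)(l_2,r_2)=(l_1,r_2)$ (see \cite{howie}). Heuristically the distinct kernels that $I$ contributes to $\Gr(I)$ are indexed by the first coordinate $l$, so one generator can account for all of them only when $L$ is a singleton. The rigorous step is to show that commutativity forces precisely this: imposing $ab=ba$ on $L\times R$ gives $(l_1,r_2)=(l_2,r_1)$ for all coordinate choices, and specialising $r_1=r_2$ yields $l_1=l_2$ while specialising $l_1=l_2$ yields $r_1=r_2$. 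Hence $|L|=|R|=1$, and the minimal ideal reduces to a single idempotent $e$.

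With $I=\{e\}$ in hand the conclusion is immediate. Any generating set of $I$ consists only of $e$, so $b_1=e$ and $\langle b_1\rangle=I$. The minimal-ideal reduction $\Gr(S)=\Gr(I)$ then gives $\Gr(S)=\Gr(I)=\Gr(\{e\})=\Gr(\{b_1\})$, as required.

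I expect the main obstacle to be exactly this collapse of the minimal ideal to a point, and it is here that commutativity is indispensable rather than incidental. For a genuinely non-commutative band whose kernel is a non-trivial rectangular band, the distinct $R$-classes carry genuinely distinct kernels under a faithful representation, so $\Gr(\{b_1\})$ would then have strictly more edges than $\Gr(I)$ and no reduction to a single generator could hold. Making the rectangular-band computation airtight, and thereby isolating the semilattice (commutativity) hypothesis as the assumption actually responsible for the single-generator statement, is the crux; the remaining bookkeeping is routine given the Band lemma and the minimal-ideal reduction.
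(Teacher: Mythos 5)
Your proof is correct, but it takes a genuinely different route from the paper's. The paper never determines the structure of $I$: its proof reuses the word argument from the Band lemma (an element $x\in I$, written as a word in $b_1,\dots,b_s$, has the same kernel as the first letter, since ranks in $I$ agree) and then uses commutativity to bring each generator to the front of a suitable word, concluding that all the $b_i$ --- and hence all of $I$ --- share a single kernel, so one generator suffices. You instead pin down $I$ itself: the minimal ideal of a finite band is completely simple with trivial subgroups, hence a rectangular band $L\times R$, and commutativity forces $|L|=|R|=1$, so $I=\{e\}$ and the lemma becomes immediate from $\Gr(S)=\Gr(I)$. Your conclusion is strictly stronger (the paper's argument leaves open that $I$ might contain several elements with a common kernel, whereas you show $I$ is a singleton), and it can even be had more cheaply: a finite semilattice has a zero $z=\prod_{s\in S}s$, and $I=\{z\}$. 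Both you and the paper correctly read the hypothesis ``band'' as the semilattice of the surrounding paragraph; as printed the statement is false for general bands, as your rectangular-band discussion shows. One small caveat on your closing remark: a non-commutative band can have a non-trivial \emph{right-zero} kernel ($|L|=1$, $|R|\geq 2$), in which case all elements of $I$ nonetheless share one kernel and the single-generator reduction still holds; so commutativity is indispensable only for killing the $L$-direction, and your claim about genuinely distinct kernels is accurate precisely when there are at least two $R$-classes ($|L|\geq 2$), which your phrasing about ``distinct $R$-classes'' only just covers.
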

\begin{proof}
 Since $S$ is a band, $\Gr (S)$ is generated by $b_1,...,b_s$. By the same argument as in the proof for bands, an element $x$ has the same kernel as $b_i$, for some $i$, and commutativity guarantees that $x$ has the same kernel as all the $b_i$. Therefore, we only need one of them to generate $\Gr (S)$.
\end{proof}

\paragraph{Left-zero Semigroups} A left-zero semigroup $S$ satisfies the following condition: \linebreak[4]$ab=a$, for all $a,b\in S$. In particular, left-zero semigroups are bands; however, generating sets for $\Gr (S)$ are even easier to determine.

\begin{lemma}\label{lemmaonleftzerosemigroups}
 Let $S$ be generated by $a_1,...,a_r$. Then, $\Gr (S)=\Gr (\{a_1,...,a_r\})$.
\end{lemma}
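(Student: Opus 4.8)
The plan is to show that, for a left-zero semigroup, the proposed generating set already exhausts the whole semigroup, after which the asserted equality of kernel graphs becomes a tautology. First I would record the defining identity $ab=a$, valid for all $a,b\in S$, and then examine an arbitrary element of $S$, which by definition of a generating set is a product $a_{i_1}a_{i_2}\cdots a_{i_k}$ of the generators.

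The single step with any content is an induction on the word length $k$ showing that such a product collapses to its leftmost factor. The base case $k=1$ is immediate. For the inductive step, the induction hypothesis gives $a_{i_1}\cdots a_{i_{k-1}}=a_{i_1}$, whence
\[
a_{i_1}a_{i_2}\cdots a_{i_k}=\bigl(a_{i_1}\cdots a_{i_{k-1}}\bigr)a_{i_k}=a_{i_1}a_{i_k}=a_{i_1},
\]
the last equality being the left-zero identity. Since every element of $S$ is such a product, this yields $S=\{a_1,\dots,a_r\}$ as a set.

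Once $S$ and $\{a_1,\dots,a_r\}$ are identified as the same set of transformations, the equality $\Gr(S)=\Gr(\{a_1,\dots,a_r\})$ is automatic: adjacency in the kernel graph is decided entirely by the set of available transformations, since $v$ and $w$ are non-adjacent exactly when some member of that set places them in a common kernel class. I do not anticipate a real obstacle here; the only point to state carefully is the reduction $S=\{a_1,\dots,a_r\}$, which carries all the content. It is instructive to compare this with the band and semilattice cases treated just above, where a word need not reduce to a single generator and one is forced to route the argument through the minimal ideal and the equal-rank behaviour of the idempotents — the left-zero identity removes that complication outright.
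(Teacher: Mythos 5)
Your proof is correct and follows exactly the same route as the paper, which simply notes that the left-zero identity forces $\langle a_1,\dots,a_r\rangle=\{a_1,\dots,a_r\}$; your induction on word length just makes that one-line observation explicit. Nothing further is needed.
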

\begin{proof}
 For left-zero semigroups holds $\langle a_1,...,a_r\rangle=\{a_1,...,a_r\}$.
\end{proof}

Same holds for right-zero semigroups.


\subsection{Minimal Generating Sets}

As observed, the minimal ideal and, in fact, representatives of its $R$-classes already generate $\Gr (S)$. This reduces the size of a generating set significantly; however, the question of a minimal generating set arises. The interest in this question comes from its connections to graphs. In particular, if given a generating set for $\Gr (S)$, we can construct $\Gr (S)$, but also its complement graph, their automorphism group and their endomorphism monoids. All this information is implicitly included in the generators. Thus, the remainder of this section is devoted to minimal generating sets.

Unfortunately, this problem is not going to be solved in full generality, here, though it is easy to find minimal generating sets for the four kinds of semigroups from above. Instead, minimal generating sets are provided for a choice of the most important graphs from the examples in Section \ref{section4}. In detail, we cover the multi-partite graph, the ladder graph, the square lattice graph, the Hamming graph, and some of its variations. We start with the multi-partite graph, as this case is straightforward.

\begin{lemma}\label{chap8lemmamultipartitegraph}
 The multi-partite graph has a minimal generating set of size $1$.
\end{lemma}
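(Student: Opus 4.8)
The plan is to exhibit a single endomorphism whose kernel graph already equals the whole multi-partite graph, and then argue that no smaller (empty) set can do the job. Write $X=\overline{K_{r_1}.K_{r_2}.\cdots.K_{r_s}}$ and let $P_1,\dots,P_s$ denote its parts, with $|P_i|=r_i$, so that two vertices of $X$ are adjacent precisely when they lie in distinct parts. Since $X$ is a hull (shown earlier), generating $\Hull(X)$ amounts to producing a set $S\subseteq\End(X)$ with $\Gr(S)=X$.

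The key observation I would start from is that for a single transformation $t$ the kernel graph $\Gr(\{t\})$ is completely determined by the kernel of $t$: two vertices $v,w$ are adjacent in $\Gr(\{t\})$ if and only if $vt\neq wt$, so $\Gr(\{t\})$ is exactly the complete multipartite graph whose parts are the kernel classes of $t$. Hence it suffices to find one endomorphism of $X$ whose kernel classes are precisely $P_1,\dots,P_s$. I would define $t$ by sending every vertex of $P_i$ to a fixed representative $p_i\in P_i$. To see that $t\in\End(X)$, note that an edge of $X$ joins some $v\in P_i$ to some $w\in P_j$ with $i\neq j$; then $vt=p_i$ and $wt=p_j$ lie in different parts, so $\{p_i,p_j\}$ is again an edge, which is all that is required of a graph endomorphism. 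Its kernel classes are exactly the parts $P_i$, so $\Gr(\{t\})$ is complete multipartite with parts $P_1,\dots,P_s$, i.e. $\Gr(\{t\})=X=\Hull(X)$, and therefore $\{t\}$ generates.

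For minimality I would observe that a generating set with no elements yields $\Gr(\emptyset)=K_n$ (with $n=\sum r_i$), since in the absence of any collapsing transformation every pair of vertices is adjacent; this is the complete graph, which differs from $X$ as soon as some part has size at least $2$, the case for a genuine multi-partite graph. Thus one generator is both sufficient and necessary. There is little real obstruction here: the only things that genuinely need checking are that the part-collapsing map is a bona fide endomorphism and that its kernel matches the parts, and both follow immediately from the complete-multipartite structure. The conceptual point worth stressing is that a single transformation can only ever produce a complete multipartite kernel graph, so the multi-partite graphs are exactly the graphs realizable from one generator, which is why the minimal size is as small as it can possibly be.
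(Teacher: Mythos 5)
Your proof is correct and is exactly the argument the paper intends: the paper omits the proof as ``straightforward,'' and elsewhere (Section \ref{section6}, the one-element case) uses precisely your key observation that $\Gr(\{t\})$ is the complete multipartite graph on the kernel classes of $t$. The endomorphism collapsing each part to a representative, together with the remark that the empty set yields the complete graph, settles both existence and minimality.
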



Next, the ladder graph $LD(n,2)$ is considered. Here, we are going to encounter another famous combinatorial object; the binary Hamming code.

\begin{lemma}
 Let $n$ be a positive integer and let $r$ be minimal with respect to $n\leq | \mathbb{F}_2^{r}|$. Then, any $n$ vectors of $\mathbb{F}_2^{r}$ induce a generating set of size $r+1$ for $LD(n,2)$.
\end{lemma}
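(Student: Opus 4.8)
The plan is to read $LD(n,2)$ as the $n$-rung ladder, i.e.\ the disjoint union $n.K_2$ of $n$ edges, which by the results on unions of complete graphs is a hull, so that $\Gr(\End(LD(n,2)))=LD(n,2)$ and it suffices to exhibit a set $S'\subseteq\End(LD(n,2))$ of size $r+1$ with $\Gr(S')=LD(n,2)$. First I would label the $2n$ vertices as $(i,c)$ with $i\in\{1,\ldots,n\}$ the rung and $c\in\mathbb{F}_2$ the endpoint, and describe the minimal-rank (rank $2$) endomorphisms: each sends every rung onto a single fixed target rung, and on rung $i$ is determined solely by whether it flips the two endpoints. By Corollary~\ref{corminimalidealandidempotents} it is enough to work inside the minimal ideal, so I may fix the target rung and encode each such map by a \emph{flip-pattern} $\sigma\in\mathbb{F}_2^n$, where $\sigma(i)=1$ means rung $i$ is flipped, so that $(i,c)\mapsto(\text{target},\,c\oplus\sigma(i))$.

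The key computation is the collapse condition. For $i\neq j$ the map $\sigma$ collapses $(i,c)$ and $(j,d)$ exactly when $\sigma(i)\oplus\sigma(j)=c\oplus d$, while the two endpoints of a single rung are never collapsed (they are an edge, preserved by every endomorphism). Hence a family $\{\sigma_1,\ldots,\sigma_k\}$ satisfies $\Gr(\{\sigma_1,\ldots,\sigma_k\})=LD(n,2)$ precisely when every cross-rung pair of vertices is collapsed by some $\sigma_t$; the within-rung edges survive automatically. Assigning to each rung $i$ its column $\mathbf{v}_i=(\sigma_1(i),\ldots,\sigma_k(i))\in\mathbb{F}_2^k$, collapsing all same-colour cross-rung pairs forces $\mathbf{v}_i,\mathbf{v}_j$ to agree in some coordinate, and collapsing all opposite-colour pairs forces them to differ in some coordinate. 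Thus the generating condition becomes: the $n$ columns are pairwise distinct and pairwise non-complementary.

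Given this reformulation the construction is immediate. Taking any $n$ distinct vectors $\mathbf{w}_1,\ldots,\mathbf{w}_n\in\mathbb{F}_2^r$ (these exist since $n\le 2^r$), I read off $r$ flip-patterns coordinatewise and append one constant pattern $\sigma_{r+1}\equiv 0$. The first $r$ coordinates make the columns distinct, handling every opposite-colour pair, while the all-zero coordinate makes every pair of columns agree there, handling every same-colour pair simultaneously; so these $r+1$ maps generate, for \emph{any} choice of the $n$ vectors. I would close by checking the inclusion carefully: since $S'\subseteq\End(LD(n,2))$, monotonicity gives $\Gr(S')\supseteq LD(n,2)$ for free, and the collapse argument supplies the reverse inclusion. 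The main subtlety to flag is why $r$ maps cannot suffice and the extra coordinate is genuinely forced: because $n>2^{r-1}$, any $n$ distinct vectors of $\mathbb{F}_2^r$ must already contain a complementary pair, leaving one same-colour cross-rung pair uncollapsed; the constant row repairs exactly this defect, and the dual counting bound (at most one vector per antipodal pair, hence $n\le 2^{k-1}$, so $k\ge r+1$) is where the binary Hamming-code threshold enters and shows $r+1$ is optimal.
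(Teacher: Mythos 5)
Your construction is essentially the paper's: the paper likewise forms the matrix with the given $n$ vectors of $\mathbb{F}_2^{r}$ as columns, appends a constant row (all ones rather than your all zeros, which is immaterial), and reads the $r+1$ rows as rank-$2$ flip-pattern transformations; you simply carry out the verification (the ``columns pairwise distinct and pairwise non-complementary'' criterion, plus the optimality count) that the paper dismisses as easily checked. The one point worth flagging is that you correctly read ``any $n$ vectors'' as ``any $n$ \emph{distinct} vectors'', which is what the statement must mean for the construction to work.
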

\begin{proof}
 We use a construction similar to the parity check matrix of the binary Hamming code. Let $M$ be a matrix whose columns are any $n$ vectors from $\mathbb{F}_2^{r}$. Now, add a row consisting of $1$'s to $M$. By substituting the $1$'s by the tuple $1,2$ and $0$'s by the tuple $2,1$, the rows of this matrix form transformations of $2n$ points. We can easily check that these transformations generate $LD(n,2)$.
\end{proof}

\begin{theorem}
 The hull of $LD(n,2)$ has a minimal generating set of size $r+1$, where $r$ is minimal with $n \leq |\mathbb{F}_2^{r}|$.
\end{theorem}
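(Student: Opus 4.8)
The upper bound is already in hand: the preceding lemma exhibits an explicit generating set of size $r+1$, so the whole content of the theorem is the matching lower bound, namely that no set of fewer than $r+1$ endomorphisms can have kernel graph equal to $LD(n,2)$. The plan is to encode candidate generating sets as binary matrices and reduce the minimality question to a short counting argument in $\F_2^{\,m}$.

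First I would reduce to generators of minimal rank. Writing the $2n$ vertices as $n$ rungs (pairs) $\{a_i,b_i\}$, the hull $LD(n,2)$ has these rungs as its only edges, so its minimal-rank endomorphisms have rank $2$ and collapse every rung onto one chosen edge. By Corollary \ref{corminimalidealandidempotents} and Theorem \ref{theoremongeneratingsubsemigroup} the graph $\Gr(S)$ can be built from rank-$2$ maps; more to the point for a lower bound, if $T$ is any set of endomorphisms with $\Gr(T)=LD(n,2)$, then composing each $f\in T$ with a fixed rank-$2$ map yields a set $T'$ of rank-$2$ endomorphisms with $|T'|\le|T|$ whose kernels are coarser, so $T'$ collapses everything $T$ does while still fixing every rung setwise; hence $\Gr(T')=LD(n,2)$ as well. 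Thus a minimal generating set may be assumed to consist of rank-$2$ maps. Each such map is a proper $2$-colouring, and since the two ends of rung $i$ receive the two colours, I record the map $f_k$ by the vector $c^{(k)}\in\F_2^{\,n}$ whose $i$-th entry says which way round rung $i$ is coloured. Stacking $m=|T|$ of these as rows gives an $m\times n$ matrix over $\F_2$ with columns $C_1,\dots,C_n\in\F_2^{\,m}$.

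Next I would translate the requirement $\Gr(T)=LD(n,2)$ into a clean condition on the columns. Reading off kernels, a same-side pair such as $a_i,a_j$ is collapsed by $f_k$ exactly when $c_i^{(k)}=c_j^{(k)}$, while a crossed pair $a_i,b_j$ is collapsed exactly when $c_i^{(k)}\ne c_j^{(k)}$, and the two ends of a single rung are never collapsed. Demanding that every non-rung pair be collapsed by at least one generator therefore forces, for all $i\ne j$, both a coordinate where $C_i$ and $C_j$ agree and a coordinate where they differ. Equivalently, the columns $C_1,\dots,C_n$ must be pairwise distinct and pairwise non-complementary (no $C_i$ equal to $\overline{C_j}$).

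Finally the counting. In $\F_2^{\,m}$ a vector and its complement are always distinct, so the $2^m$ vectors split into $2^{m-1}$ complementary pairs, and a family that is pairwise distinct and pairwise non-complementary uses at most one vector from each pair. Hence $n\le 2^{m-1}$, which gives $m\ge 1+\ceil{\log_2 n}=r+1$ since $r$ is minimal with $n\le 2^r$. Together with the upper bound this pins the minimal size at exactly $r+1$. I expect the only delicate step to be the reduction to rank-$2$ generators together with the exact bookkeeping that turns the adjacency requirement into the ``distinct and non-complementary'' condition; the final count is immediate once the encoding is in place.
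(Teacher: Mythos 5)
Your proof is correct and follows essentially the same route as the paper: reduce to rank-$2$ generators with a common image, encode them as the rows of an $m\times n$ binary matrix, and observe that the adjacency requirements force the columns to be pairwise distinct and pairwise non-complementary, whence $n\le 2^{m-1}$ and $m\ge r+1$. The paper packages the final count by normalising the last row to all $1$'s and applying pigeonhole in $\F_2^{k-1}$, which is the same bound; your write-up is, if anything, more explicit about the two steps the paper dispatches with ``wlog'' (the reduction to rank $2$ via composition with a fixed rank-$2$ map, and the translation of adjacency into the distinct-and-non-complementary condition).
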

\begin{proof}
 We need to show that the above generating set is minimal. Assume we are given minimal generating set with $k<r+1$ elements. Wlog the images of these $k$ transformations are the set $\{1,2\}$. However, by the correspondence above (that is we encode the $1$'s and $0$'s as above) this leads to a $k\times n$ matrix whose columns are vectors in $\mathbb{F}_2^{k}$. Wlog we may assume that the last row consists of $1$'s, but then there is a column which appears twice in the matrix. Thus, there are too many edges between the $4$ vertices which are encoded by these two columns.
\end{proof}

It is more difficult to find minimal generating sets for $n.K_r$, where $r>2$ (which is a hull). However, it is possible to provide some bounds.

\begin{lemma}
 The graph $n.K_3$ can be generated by at most $n$ generators.
\end{lemma}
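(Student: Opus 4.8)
The plan is to exhibit $n$ explicit rank-$3$ endomorphisms of $\Gamma = n.K_3$ whose generated semigroup $S$ satisfies $\Gr(S)=\Gamma$. I label the vertices by pairs $(i,a)$, where $i\in\{1,\dots,n\}$ indexes the copy of $K_3$ and $a\in\Z_3$ indexes the three vertices inside that copy. For each $t\in\{1,\dots,n\}$ I define $g_t$ to send every copy onto the first one, rotating the $t$-th copy by one step and mapping each other copy by the identity on its three points; that is, $g_t(i,a)=(1,\,a+\delta_{it})$ in $\Z_3$, where $\delta_{it}=1$ if $i=t$ and $0$ otherwise. Each $g_t$ maps every triangle isomorphically onto $\{(1,0),(1,1),(1,2)\}$, so it is an endomorphism of $\Gamma$ of rank $3$, and I take $S=\langle g_1,\dots,g_n\rangle$.

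First I would record that the within-copy adjacencies are preserved by the whole semigroup for free, so they need not be engineered. Since $K_3$ is a core, every endomorphism of $n.K_3$ sends adjacent vertices to adjacent, hence distinct, vertices, so it is injective on each copy; as the only triangles of $n.K_3$ are its copies of $K_3$, no endomorphism — and thus no element of $S$ — ever collapses two vertices lying in the same copy. Hence the edges of $\Gr(S)$ inside each copy are exactly the edges of $K_3$. It therefore remains only to check that every pair of vertices from two distinct copies is collapsed by some element of $S$, i.e.\ is a non-edge of $\Gr(S)$.

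Fix copies $i\neq j$ and positions $a,b\in\Z_3$. The generator $g_t$ collapses $(i,a)$ and $(j,b)$ precisely when $a+\delta_{it}=b+\delta_{jt}$, i.e.\ when $b-a\equiv \delta_{it}-\delta_{jt}\pmod 3$. As $t$ ranges over $\{1,\dots,n\}$, the shift $\delta_{it}-\delta_{jt}$ equals $1$ for $t=i$, equals $-1\equiv 2$ for $t=j$, and equals $0$ for every $t\notin\{i,j\}$. Because $n\geq 3$ such a third index always exists, so the realised shifts exhaust $\Z_3$; hence for each target difference $b-a$ some generator collapses $(i,a)$ and $(j,b)$. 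Thus all cross-copy pairs are non-edges, and together with the previous paragraph this gives $\Gr(S)=n.K_3$, so $n$ generators suffice.

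The hard part — and the reason for the shape of the bound — is this final covering step: a single rank-$3$ endomorphism merges two copies only along a perfect matching (three of the nine position-pairs), so realising all three relative shifts between a fixed pair of copies forces the work to be spread across several generators. The device above lets the zero shift be supplied by any generator touching neither copy, which is exactly what keeps the count at $n$; note that this uses $n\geq 3$ (for smaller $n$ there is no spare copy, and these degenerate cases are handled by inspection). I would also remark that $n$ is surely far from optimal: encoding the copies as vectors over $\Z_3$ and requiring only that each pairwise difference vector meets every residue suggests a generating set of logarithmic size, but the clean construction above already establishes the stated upper bound.
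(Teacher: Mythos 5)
Your proof is correct and takes essentially the same approach as the paper's: both exhibit $n$ rank-$3$ endomorphisms mapping every copy onto a single triangle, with shifts encoded by the columns of an $n\times n$ matrix over $\Z_3$ arranged so that each pair of columns realises all three residues among its coordinatewise differences (the paper borders $J+I$ with zeros where you use the identity matrix, but the mechanism — including the reliance on a third, untouched copy to supply the zero shift — is identical). The only caveat, which applies equally to the paper's construction, is that the argument genuinely needs $n\geq 3$: for $n=2$ every singular endomorphism of $2.K_3$ collapses only a perfect matching of the nine cross-copy pairs, so two generators cannot suffice.
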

\begin{proof}
 Consider the $n\times n$ matrix  $M$, where $M$ has first row and first column $0$'s and the lower right $(n-1)\times (n-1)$ submatrix is $(J+I)$, where $J$ is the all $1$ matrix and $I$ the identity matrix. Now encode the $0$'s with the triple $1,2,3$; the $1$'s with $2,3,1$, and the $2$'s with $3,1,2$. The $n$ rows of the encoded $n\times 3n$ matrix generate $n.K_3$.
\end{proof}

\begin{lemma}
 The graph $n.K_r$ can be generated by at most $r$ generators, if $2\leq n \leq N(r)+1$ where $N(r)$ is the maximal number of mutually orthogonal Latin squares of order $r$.
\end{lemma}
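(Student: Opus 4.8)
The plan is to realise $n.K_r$ as the kernel graph of $r$ carefully chosen minimal-rank endomorphisms, and to match the combinatorial condition this forces onto them with the existence of $n-1$ mutually orthogonal Latin squares of order $r$. Write the vertices of $n.K_r$ as pairs $(i,a)$ with $i\in\{1,\dots,n\}$ indexing the copy and $a\in\{1,\dots,r\}$ the vertex inside it. Recall that an endomorphism of $n.K_r$ sends each copy of $K_r$ bijectively onto a single copy (since $K_r$ is a core and connected components map into components), so the minimal rank of $S=\End(n.K_r)$ is $r$, attained by the maps whose image is one fixed copy. I first reduce the lemma to the following claim: it suffices to produce $r$ rank-$r$ endomorphisms $t_1,\dots,t_r$ of $n.K_r$ such that every pair $(i,a),(j,b)$ with $i\neq j$ is collapsed by at least one $t_c$. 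Indeed, the $t_c$ lie in $\End(n.K_r)$, hence so do all their products, and none of these collapses two vertices of a common copy; therefore every edge of $n.K_r$ survives in $\Gr(\langle t_1,\dots,t_r\rangle)$. Conversely the collapsing condition makes every non-edge of $n.K_r$ a non-edge of the kernel graph, so $\Gr(\langle t_1,\dots,t_r\rangle)=n.K_r$.

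Before constructing the $t_c$ I would record the counting that both motivates the hypothesis and pins the construction down. A rank-$r$ endomorphism has $r$ kernel classes, each a transversal meeting every copy exactly once, so it collapses exactly $r\binom{n}{2}$ pairs living in distinct copies. There are $\binom{n}{2}r^2$ such pairs in total, so $r$ generators can cover them only if each pair is collapsed \emph{exactly} once. Thus the $r^2$ kernel classes of $t_1,\dots,t_r$ must form a system of $r^2$ transversals of the $n$ copies in which every cross-copy pair lies in a unique transversal, and these transversals are partitioned into $r$ parallel classes (one class per generator, its $r$ pairwise-disjoint kernel classes). This is precisely a resolvable transversal design on $n$ groups of size $r$, equivalently an $(n,r)$-net admitting a resolution.

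To build such a system I would use the orthogonal array viewpoint. The hypothesis $2\le n\le N(r)+1$ means exactly that $n-1\le N(r)$, so there exist $n-1$ MOLS of order $r$; equivalently there is an orthogonal array $OA(n+1,r)$ of strength $2$ and index $1$, an array with $n+1$ rows and $r^2$ columns over $\{1,\dots,r\}$ in which any two rows contain every ordered pair in exactly one column. I would single out one row to index the generators and let the remaining $n$ rows index the $n$ copies. Reading off each column gives one kernel class: column $(x,y)$ specifies, for the copy attached to each of the $n$ chosen rows, which of its $r$ vertices is collapsed. Grouping the $r^2$ columns by the value in the distinguished row yields $r$ groups of $r$ columns; within each group the strength-$2$ property between the distinguished row and each other row shows the $r$ columns partition all $nr$ vertices, so each group is the parallel class of one generator $t_c$, and its image may be identified with any one fixed copy to make $t_c$ an honest idempotent of rank $r$. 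Finally, the strength-$2$, index-$1$ property applied to the two rows attached to two copies shows that any pair $(i,a),(j,b)$ with $i\neq j$ occurs in exactly one column, i.e.\ is collapsed by exactly one $t_c$; this discharges the collapsing condition of the reduction and shows the bound $r$ is attained.

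The step I expect to be the crux is the dictionary between the three descriptions — the $r$ endomorphisms, the resolvable transversal design, and the $n-1$ MOLS / the array $OA(n+1,r)$ — and in particular checking that the resolution of the design corresponds to the partition of the $r^2$ kernel classes into the $r$ generators rather than to an arbitrary regrouping. Once that correspondence is set up correctly, the remaining verifications (that each $t_c$ is a genuine rank-$r$ endomorphism, injective on each copy, and that the strength-$2$ condition yields the cross-copy collapses) are routine. I would also remark that the counting above shows the estimate $r$ cannot be improved by this method: fewer than $r$ minimal-rank generators cannot collapse all $\binom{n}{2}r^2$ cross-copy pairs.
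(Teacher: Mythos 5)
Your proof is correct, and it pursues the same basic idea as the paper (turn a set of $n-1$ MOLS of order $r$ into $r$ minimal-rank endomorphisms), but the execution is genuinely different and, in fact, more careful. The paper forms the $r\times nr$ matrix $(A_1\,|\,L_1\,|\cdots|\,L_{n-1})$ and takes its \emph{rows} as the transformations; for that to work one needs that for any two of the squares and any two column indices $a,b$ there is a row $c$ with $L_i(c,a)=L_j(c,b)$, and this does \emph{not} follow from orthogonality alone. For instance $L_1(c,x)=c+x$ and $L_2(c,x)=c+2x$ over $\mathbb{Z}_5$ are orthogonal, yet $L_1(c,a)=L_2(c,b)$ holds for all $c$ or for no $c$ according as $a=2b$ or not, so the paper's literal construction leaves cross-copy pairs uncollapsed. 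Your version avoids this by passing to the orthogonal array $OA(n+1,r)$, distinguishing one row to index the generators and letting each column be a kernel class: then the strength-$2$, index-$1$ property applies between the distinguished row and each copy-row (giving that each generator's classes partition the vertex set) and between any two copy-rows (giving that every cross-copy pair is collapsed exactly once). This is equivalent to running the paper's construction with suitably chosen conjugates of the squares, and it works for an arbitrary set of $n-1$ MOLS. Your preliminary counting argument ($r$ minimal-rank generators collapse at most $r^2\binom{n}{2}$ cross-copy pairs, which is exactly the number available) is a nice addition not present in the paper: it both explains why the design must be resolvable with every pair covered exactly once and shows that fewer than $r$ minimal-rank generators cannot suffice.
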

\begin{proof}
 Let $2 \leq n\leq N(r)+1$ and consider the $r\times nr$ matrix $M=\left( A_1|A_2|\cdots |A_{n} \right)$ where $A_i$ are $r\times r$ matrices defined as follows: $A_1$ has all rows $1,2,...,r$; whereas, $A_2,...,A_{n}$ are the sets of mutually orthogonal Latin squares. The rows of $M$ form transformations on $nr$ points which generate $n.K_r$.
\end{proof}

We continue with the square lattice graph $L_2(n)$ and other Hamming graphs.

\begin{theorem}
 Let $\Gamma$ be the square lattice graph $L_2(n)$, which is a hull. Then, the following holds for generating sets of the hull:
 \begin{enumerate}
  \item If $n$ is a prime power, then the minimal generating set is given by a complete set of $n-1$ MOLS.
  \item If $n$ is no prime power, then the minimal generating set contains at most $n(n-1)$ elements.
 \end{enumerate}
\end{theorem}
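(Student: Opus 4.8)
The plan is to translate everything into the language of Latin squares and then into a covering problem on the non-edges of $\Gamma=L_2(n)$. Recall that $L_2(n)$ is the orthogonal-array graph with vertex set $\Z_n\times\Z_n$, two cells being adjacent iff they share a row or a column; so its non-edges are exactly the pairs of cells in general position (distinct rows and distinct columns), and its complement is the tensor product $K_n\times K_n$. Its maximum cliques are the rows and columns, so $\omega=n$, and a proper $n$-colouring is precisely a Latin square $L$, whose symbol classes are transversals, i.e.\ permutation matrices. By the orthogonal-array lemma the minimal-rank endomorphisms are exactly these colourings, and by Corollary \ref{corminimalidealandidempotents} and Theorem \ref{theoremongeneratingsubsemigroup} a minimal generating set of $\Hull(\Gamma)=\Gamma$ may be sought among Latin squares. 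A family $\{L_1,\dots,L_m\}$ generates the hull iff every non-edge is collapsed by some $L_t$, that is, for every pair of cells $(i,j),(k,l)$ in general position there is a $t$ with $L_t(i,j)=L_t(k,l)$. Thus the whole question becomes: cover all general-position pairs by the symbol classes of as few Latin squares as possible.

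For the lower bound and the prime-power case I would first count. A single Latin square collapses $\binom{n}{2}$ pairs inside each of its $n$ symbol classes, hence $n\binom{n}{2}$ non-edges; and since any kernel class of an endomorphism is an independent set, of size at most $\alpha(\Gamma)=n$, no endomorphism collapses more than $n\binom{n}{2}$ non-edges. As $\Gamma$ has $\frac{n^2(n-1)^2}{2}=(n-1)\,n\binom{n}{2}$ non-edges, every generating set has at least $n-1$ members. When $n$ is a prime power a complete set of $n-1$ MOLS exists; together with the two trivial parallel classes (rows and columns) it gives the parallel classes of an affine plane of order $n$, in which any two points lie on a unique line. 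Two cells in general position therefore agree in exactly one of the $n-1$ MOLS, so these $n-1$ Latin squares collapse every non-edge exactly once. By the preceding lemma they generate the hull, and by the count they are minimal; this settles part (1).

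For part (2) I would use the explicit squares $L_\sigma(i,j)=\sigma(i)+j$ (a Latin square for every $\sigma\in\Sym(n)$), for which $L_\sigma(i,j)=L_\sigma(k,l)$ iff $\sigma(i)-\sigma(k)=l-j$. A family $\{L_\sigma\}_{\sigma\in\mathcal S}$ then generates the hull iff for every ordered pair of distinct rows $(i,k)$ and every nonzero shift $d\in\Z_n$ some $\sigma\in\mathcal S$ satisfies $\sigma(i)-\sigma(k)=d$. Fixing $d$, the row-pairs realised by a single $\sigma$ form the graph of the fixed-point-free permutation $k\mapsto\sigma^{-1}(\sigma(k)+d)$, so covering all $n(n-1)$ ordered row-pairs for this $d$ amounts to tiling them by graphs of fixed-point-free permutations, each backed by a compatible $\sigma$; this can be done with at most $n$ squares per shift and $n-1$ shifts, giving the bound $n(n-1)$. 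The point that keeps this within $n(n-1)$ rather than forcing a complete set of MOLS is that one square handles all $n$ of its symbol classes simultaneously, so the squares may (and for non-prime-power $n$ must) overlap.

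The main obstacle is exactly this last construction. For prime-power $n$ every shift $d$ is a unit, the permutations $k\mapsto k+a$ give a clean tiling, and the construction collapses back to the $n-1$ MOLS of part (1); but when $n$ is not a prime power the zero-divisor shifts $d$ have no such linear realisation, and one must produce, for each such $d$, fixed-point-free permutations whose cycle lengths are compatible with the order of $d$, together with permutations $\sigma$ realising them. Carrying out this case analysis while keeping the total at most $n(n-1)$ — and explaining why the true optimum (governed by the maximal number $N(n)$ of MOLS) lies between $n-1$ and $n(n-1)$ — is the delicate part; the counting bound and the affine-plane argument for part (1) are routine by comparison.
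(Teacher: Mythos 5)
Part (1) of your argument is sound, and in fact more explicit than what the paper records: the count that a transformation whose kernel classes are independent sets of $L_2(n)$ (hence of size at most $n$) collapses at most $n\binom{n}{2}$ of the $\tfrac{n^2(n-1)^2}{2}=(n-1)\,n\binom{n}{2}$ non-edges gives the lower bound of $n-1$ generators cleanly, and the affine-plane observation gives the matching upper bound. The paper only asserts that fewer than $n-1$ transformations ``would leave too many edges,'' so your counting argument is a welcome substantiation of that claim.

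Part (2), however, has a genuine gap, and you have located it yourself: you restrict to the squares $L_\sigma(i,j)=\sigma(i)+j$, reduce to covering, for each nonzero shift $d$, all ordered row-pairs by permutations of the form $k\mapsto\sigma^{-1}(\sigma(k)+d)$, and then note that these are forced to be conjugates of translation by $d$, i.e.\ to have all cycles of length $n/\gcd(n,d)$. Covering the arcs of the complete digraph by permutations of a prescribed uniform cycle type is a nontrivial resolvable-decomposition problem (already for Hamiltonian cycle type there are exceptional orders), so ``at most $n$ squares per shift'' is not established and the route as stated does not close. The paper sidesteps this entirely: it takes one Latin square $L$, and for each row $r$ the $n-1$ squares obtained by fixing row $r$ and cyclically permuting the remaining rows. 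The verification then decouples the two rows of a non-edge: given cells $(i,j)$ and $(k,l)$ in general position, the unique row $a$ with $L(a,j)=L(k,l)$ satisfies $a\neq k$, and in the family that fixes row $k$ the image $\pi_k^t(i)$ sweeps through all rows other than $k$, so some $t$ sends $i$ to $a$ while leaving $k$ in place. You can import the same trick into your framework (for each $k_0$ and each power $t$ of an $(n-1)$-cycle $c$ on $\Z_n\setminus\{0\}$, take $\sigma(k_0)=0$ and $\sigma(x)=c^t(\alpha(x))$ otherwise), but as written your per-shift tiling is the hard way round and is not carried out.
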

\begin{proof}
 First, a complete set of $n-1$ MOLS generates $\Hull(L_2(n))$. If however, we would pick any $n-2$ transformations or less, then there would be too many edges in the resulting graph. 
 
 However, for non-prime power $n$, it is unknown whether there are complete sets of MOLS or not. In these cases, we pick the following transformations to generate the hull. Identify the vertices with points in $\mathbb{Z}_{n}^{2}$ and pick any Latin square. Fix one of its rows and permute the remaining $n-1$ cyclically using an $n-1$ cycle. Applying this permutation $n-1$ times results in $n-1$ distinct Latin squares. Doing this for all rows, provides us with $n(n-1)$ Latin squares, and thus, $n(n-1)$ transformations. It is clear that these generate the hull.
\end{proof}

This method can be easily extended to higher dimensional Hamming graphs.

\begin{corollary}
 Let $\Gamma$ be the Hamming graph $H(m,n)$, which is a hull. Then, the following holds for generating sets of the hull:
 \begin{enumerate}
  \item If $n$ is a prime power, then the minimal generating set is given by a complete set of orthogonal Latin hypercubes.
  \item If $n$ is no prime power, then the minimal generating set contains at most $n^{m-1}(n-1)^{m-1}$ elements.
 \end{enumerate}
\end{corollary}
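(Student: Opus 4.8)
The plan is to reduce the statement to the two-dimensional case already settled for $L_2(n)=H(2,n)$, by first pinning down the relevant generators and then running the $L_2(n)$ argument independently in each coordinate direction. I would begin by recording the shape of the minimal-rank endomorphisms: a transformation of $H(m,n)$ of minimal rank $n$ is exactly a proper $n$-colouring, and since two vertices are adjacent precisely when they differ in a single coordinate, such a colouring is bijective in each coordinate when the others are fixed, i.e. it is a Latin hypercube $f\colon \Z_n^m\to\Z_n$. By Theorem \ref{theoremongeneratingsubsemigroup} it suffices to generate $\Gr(\End(H(m,n)))=H(m,n)$ using only minimal-rank transformations, so the whole problem becomes: choose a family $\mathcal F$ of Latin hypercubes with $\Gr(\mathcal F)=H(m,n)$. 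A hypercube $f$ collapses $v,w$ iff $f(v)=f(w)$; since a proper colouring never identifies adjacent vertices, no hypercube collapses a Hamming-distance-$1$ pair, and the only requirement is that every pair at Hamming distance $\geq 2$ be collapsed by some $f\in\mathcal F$.

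For the prime-power case I would use the field $\F_n$ and the linear hypercubes $f_c(x)=\sum_{i=1}^m c_i x_i$ with $c\in(\F_n^\times)^m$, noting that $f_c$ is a Latin hypercube exactly because each $c_i\neq 0$ (which is also why $f_c$ keeps every distance-$1$ pair apart). Since $f_c$ collapses $v,w$ iff $c\cdot(v-w)=0$, covering a distance-$\geq2$ pair $u=v-w$ (support $\geq2$) amounts to finding $c$ with all $c_i\neq0$ and $c\cdot u=0$; solving for two coordinates lying in the support of $u$ shows such a $c$ exists whenever $n\geq3$. Taking these $f_c$ up to scalar yields a complete set of mutually orthogonal Latin hypercubes, the object named in part (1); for $m=2$ it recovers the $n-1$ MOLS, which cover each distance-$2$ pair exactly once. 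The matching minimality statement (no smaller family avoids creating spurious edges) is the delicate point for $m\geq3$, since the cover stops being exact on low-support pairs.

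For the non-prime-power case I would build $\mathcal F$ as a product of $m-1$ copies of the two-dimensional construction. Starting from the base hypercube $f(x)=x_1+\cdots+x_m$ over $\Z_n$, let $P$ be the set of $n(n-1)$ permutations of $\Z_n$ produced in the $L_2(n)$ theorem (the $n$ choices of a fixed point together with the $n-1$ powers of an $(n-1)$-cycle on the remaining points), which, as that theorem shows, has the property that for every $a\neq b$ and every $d\neq0$ some $\pi\in P$ satisfies $\pi(a)-\pi(b)=d$. Setting $f_{\pi_1,\dots,\pi_{m-1}}(x)=\pi_1(x_1)+\cdots+\pi_{m-1}(x_{m-1})+x_m$ with each $\pi_i\in P$ produces at most $|P|^{m-1}=n^{m-1}(n-1)^{m-1}$ Latin hypercubes. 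To see they cover every pair $u$ of support $T$ with $|T|\geq2$, I would split into the case where $T$ meets $\{1,\dots,m-1\}$ in at least two indices (adjust two of the $\pi_i$ so that their prescribed differences sum to the required value, using $n\geq3$ to write any residue as a sum of two nonzero residues) and the case $T=\{a,m\}$ (a single $\pi_a\in P$ realises the needed nonzero difference). In either case the equation $\sum_i(\pi_i(v_i)-\pi_i(w_i))=0$ is solvable inside the family, which gives the bound in part (2).

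The hard part, as flagged, is the minimality claim in part (1): for $m\geq3$ the linear cover is genuinely redundant on pairs of small support, so a clean lower bound matching the complete set of orthogonal hypercubes does not drop out of a bare counting argument the way it does when $m=2$. The cleanest route I see is to restrict an arbitrary generating family to a $2$-dimensional coordinate subgraph (fixing $m-2$ coordinates), observe that the restrictions are Latin squares generating the corresponding $L_2(n)$, and transport the $n-1$ lower bound upward. Making this restriction argument uniform over all coordinate $2$-flats, and checking that it really forces the full orthogonal-hypercube count rather than merely $n-1$, is where the main work lies.
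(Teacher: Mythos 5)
The paper offers no proof of this corollary at all --- it is stated immediately after the $L_2(n)$ theorem with only the remark that ``this method can be easily extended to higher dimensional Hamming graphs'' --- so your proposal is already considerably more explicit than the source. Your reduction to minimal-rank transformations (Latin hypercubes, via Theorem \ref{theoremongeneratingsubsemigroup} and the characterisation of $\End(H(m,n))$ from \cite{schaeferhamminggraph}) and your two covering constructions (linear forms $f_c(x)=\sum c_ix_i$ with $c\in(\F_n^\times)^m$ in the prime-power case; products of the two-dimensional permutation family in the general case) are the intended extension of the $L_2(n)$ argument and do deliver families of the claimed sizes. The counting in part (2) works out: $|P|^{m-1}=\bigl(n(n-1)\bigr)^{m-1}=n^{m-1}(n-1)^{m-1}$, and your case split on the support of $v-w$ is sound granted the stated property of $P$.

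Two genuine gaps remain, one of which you flag yourself. First, the minimality claim in part (1) is not proved, and your proposed repair cannot work as described: restricting a generating family to coordinate $2$-flats transports only the lower bound $n-1$ from $L_2(n)$, whereas the linear family has $(n-1)^{m-1}$ members up to scalars. Worse, the naive colour-class count gives a lower bound of only $\frac{n^m-1-m(n-1)}{n^{m-1}-1}$, which is roughly $n$ for $m\geq 3$, so there is a large unexplained gap between any lower bound you can currently reach and the size of the ``complete set of orthogonal Latin hypercubes''; it is not even clear from your argument (or from the paper, which does not define the term or address the point) that a strictly smaller family cannot generate the hull. Second, your part (2) imports the property that for every $a\neq b$ and every $d\neq 0$ some $\pi\in P$ satisfies $\pi(a)-\pi(b)=d$, attributing it to the $L_2(n)$ theorem; but that theorem's proof ends with ``it is clear that these generate the hull'' and never establishes this pointwise difference-realisation property. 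For the paper's specific choice of $P$ (fix one row, take powers of an $(n-1)$-cycle on the rest) the property requires the family to act like a sharply $2$-transitive set on ordered pairs, which does not follow automatically and must be verified directly --- otherwise the covering argument in part (2) has no foundation.
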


Similarly, for orthogonal array graphs $L_k(n)$ coming from desarguesian affine planes the minimal generating set consists of the $n-k-1$ Latin squares which extend the initial set of MOLS to a complete set of $n-1$ MOLS. 


Now, further Hamming graphs are considered.

\begin{lemma}
 \begin{enumerate}
  \item The hull of $\overline{H(m,n)}$ has a minimal generating set of size $m$.
  \item The hull of $H(m,n;\{m\})$ has a minimal generating set of size $m$, too.
   \end{enumerate}
\end{lemma}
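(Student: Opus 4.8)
The two graphs live on $\Z_n^m$: writing $d$ for Hamming distance, $\overline{H(m,n)}$ has $v\sim w$ iff $d(v,w)\ge 2$, while $H(m,n;\{m\})$ is the distance-set graph with distance set $\{m\}$, i.e. the categorical product $K_n\times\cdots\times K_n$ in which $v\sim w$ iff $d(v,w)=m$ (they differ in every coordinate). By the lemma of this subsection both graphs are hulls (we assume $m\ge 2$, $n\ge 3$), so each equals its own hull; hence a generating set is precisely a family $\{t_1,\dots,t_k\}\subseteq\End(\cdot)$ whose collapsed pairs exhaust the non-edges. The plan is to exhibit $m$ explicit minimal-rank endomorphisms that together collapse every non-edge, and then to show that no family of fewer than $m$ endomorphisms can do so.

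For the upper bounds: $\overline{H(m,n)}$ has clique number $n^{m-1}$ (a single-parity-check code), so for each coordinate $i$ define the retraction $\rho_i$ that fixes all coordinates except the $i$-th and resets the $i$-th to $-\sum_{j\ne i}x_j$. Its image is the parity-check code, a maximum clique, so $\rho_i$ has minimal rank $n^{m-1}$; it identifies $v,w$ exactly when they agree off coordinate $i$, i.e. when $d(v,w)\le 1$, so it collapses no edge and is an endomorphism. For $H(m,n;\{m\})$, whose clique number is $n$ (the diagonal), define $\pi_i(x)=(x_i,\dots,x_i)$; this lands in the diagonal clique, has minimal rank $n$, and identifies $v,w$ exactly when they agree in coordinate $i$. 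In both cases every non-edge is a pair agreeing in at least one coordinate, hence is collapsed by the corresponding $\rho_i$ or $\pi_i$; thus $\{\rho_1,\dots,\rho_m\}$ and $\{\pi_1,\dots,\pi_m\}$ generate the two hulls, and this is consistent with Theorem~\ref{theoremongeneratingsubsemigroup}, which permits restricting to minimal-rank generators.

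For the lower bound on $\overline{H(m,n)}$ I would use a direct gadget: let $\mathbf 0$ be the origin and $u_1,\dots,u_m$ the coordinate vectors. Each pair $\{\mathbf 0,u_i\}$ has $d=1$, hence is a non-edge that some generator must collapse; but the $u_i$ are pairwise at distance $2$, hence mutually adjacent, so a single endomorphism identifying $\mathbf 0$ with two distinct $u_i,u_{i'}$ would collapse the edge $\{u_i,u_{i'}\}$, which is impossible. Thus the $m$ pairs $\{\mathbf 0,u_i\}$ force $m$ distinct generators, and the set above is minimal. For $H(m,n;\{m\})$ the analogous star-gadget would require the other endpoints to form a clique, forcing $n\ge m$, so instead I would argue through minimal-rank maps. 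Using Theorem~\ref{theoremongeneratingsubsemigroup} (any pair collapsed by $t$ is also collapsed by a minimal-rank map beneath $t$), any generating family may be replaced by one of no greater size consisting of minimal-rank endomorphisms. By \cite{schaeferhamminggraph} these are exactly the coordinate projections $\pi_i$, whose kernels are the $m$ coordinate partitions $\{x:x_i=a\}_{a\in\Z_n}$. Since for each $i$ there is a non-edge agreeing in coordinate $i$ alone (e.g. $\mathbf 0$ and the vector with $i$-th entry $0$ and all others $1$), collapsing it forces a generator with the coordinate-$i$ kernel; the $m$ such kernels are distinct, so at least $m$ generators are needed.

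The main obstacle is the classification step for $H(m,n;\{m\})$: proving that every minimal-rank endomorphism is a coordinate projection, equivalently that the only maximum independent sets of $K_n\times\cdots\times K_n$ are the canonical coordinate classes $\{x:x_i=a\}$. This is where I would lean on \cite{schaeferhamminggraph}; without it one must rule out ``mixed'' partitions of $\Z_n^m$ into maximum independent sets. For the complement the origin-and-basis-vectors gadget sidesteps this entirely, which is why I prefer it there. A minor point to verify carefully is the legitimacy of restricting to minimal-rank generators in a lower-bound argument, which holds because passing to a minimal-rank map beneath a generator only enlarges the collapsed set while preserving the property of being an endomorphism.
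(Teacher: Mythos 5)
Your proposal is correct, and its constructive half coincides with the paper's: the paper also takes, for $\overline{H(m,n)}$, the $m$ transformations collapsing the parallel classes of lines along the $m$ coordinate directions (your $\rho_i$, retracting onto a distance-$2$ code), and for $H(m,n;\{m\})$ the $m$ transformations collapsing the $(m-1)$-dimensional subarrays $\{x : x_i = a\}$ (your $\pi_i$). Where you genuinely go beyond the paper is the lower bound: the paper disposes of minimality in one sentence (``there cannot be less than $m$ transformations, since then we would not obtain the hull''), whereas you actually prove it. Your gadget for $\overline{H(m,n)}$ --- the $m$ non-edges $\{\mathbf 0, u_i\}$ whose other endpoints form a clique, so no single endomorphism may serve two of them --- is a clean, self-contained argument that needs nothing about the structure of $\End$. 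For $H(m,n;\{m\})$ you correctly observe that the same gadget needs $n\ge m$ and instead reduce (legitimately, since right-multiplying a generator only coarsens its kernel and preserves being an endomorphism) to minimal-rank generators, whose kernel classes are maximum independent sets of $K_n\times\cdots\times K_n$ and hence, by the Greenwell--Lov\'asz-type classification you cite from \cite{schaeferhamminggraph}, coordinate partitions; the $m$ distinct coordinate partitions are each forced by a suitable non-edge. This second argument does lean on an external classification (valid for $n\ge 3$), which is the one dependency worth flagging, but it is exactly the kind of justification the paper's proof silently presupposes. In short: same generators, but your write-up supplies the minimality proof the paper omits.
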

\begin{proof}
 In the first case, the $m$ transformations corresponding to the $m$ parallel class along the $m$ coordinate axes. In the second case, pick $m$ transformations each collapsing $(m-1)$-subarrays in one of $m$ possible ways. In both cases there cannot be less than $m$ transformations, since then we would not obtain the hull.
\end{proof}
%
%

In the next section, the inverse synchronization problem is introduced. Moreover, it is conjectured that every hull on $n$ vertices is generated by at most $n-1$ transformations.

\section{The Inverse Synchronization Problem}\label{section6}

The current research in synchronization theory is focused on the main problem in synchronization theory (the classification of synchronizing groups) and the secondary problem (of finding all tuples $(G,t)$ such that $\langle G,t\rangle$ is synchronizing) \cite{araujo13,pjc08,pjc13,araujo15,pjclectureonsynchronization}. The usual approach to these problems was through picking a group $G$ and finding all transformations not synchronized by it.

However, in this section we are going to reverse this approach, and change it to the problem of finding groups which do not synchronize a given set of maps. We call this problem the \textit{inverse synchronization problem}. The idea is the following. Given any set of maps $M$, construct the kernel graph $\Gr (M)$ to find its automorphism group $G$. 
\[M \rightarrow \Gr (M) \rightarrow \Aut(\Gr (M)).\]
The goal is to obtain an automorphism group not synchronizing the transformations in $M$, and to analyse it. However, this approach will not produce a satisfying result, in general, since there are things which can go wrong when considering a set $M$ instead of a semigroup $\langle M\rangle$. The next example provides a hint on what can go wrong. 

\begin{example}
 Consider the two transformations $t_1,t_2\in T_4$, where $t_1=[3,3,4,3]$ and $t_2=[3,3,2,4]$. The semigroup $S=\langle t_1,t_2\rangle$ contains a constant map $t=[4,4,4,4]$, therefore $\Gr (S)$ is the Null graph. However, if $M$ is the set $\{t_1,t_2\}$, then $\Gr (M)$ is non-trivial.
\end{example}

The reason for this discrepancy lies in the kernel structure of these transformations. Because, $t_2$ is a refinement of the kernel classes of $t_1$, the graph $\Gr (M)$ ignores $t_2$, that is, the kernel graph can be generated from $t_1$ alone. Therefore, semigroups need to be considered instead of sets. So, let $S$ denote the semigroup generated by the set $M$; then, the previous diagram transforms to 
\[S \rightarrow \Gr (S) \rightarrow \Aut(\Gr (S)).\]

By Theorem \ref{theoremongeneratingsubsemigroup} from the previous section, it can be assumed that $S$ is a left-zero semigroup. But then again, by the result on left-zero semigroups (Lemma \ref{lemmaonleftzerosemigroups}), $S$ can actually be taken to be a set; however, not just any set as seen from the last example. This means that, in fact, there are good choices and bad choices for picking a set $M$, as done initially.

Anyway, first, we consider the inverse synchronization problem for a single transformation (or respectively $S$ of size $1$); here, the dilemma of good and bad choices does not occur. Then, we discuss larger sets, and take a look at what groups can occur.

\subsubsection*{Semigroups with one Element}

Assume the semigroup $S$ contains a non-trivial singular transformation $t$ and has size $1$; so $t$ is an idempotent. However, how does the kernel graph $\Gr (S)$ look like? Well, two vertices are adjacent, if they are not in the same kernel class of $t$. Hence, the resulting graph is a multi-partite graph, each part corresponding to a kernel class of $t$.

These graphs have been covered in the examples section (see Section \ref{section2}) and by Lemma \ref{chap8lemmamultipartitegraph}, and an endomorphism of this graph is collapsing vertices lying in the same part. The structure of the automorphism group depends on the kernel structure, and it is imprimitive, in general; however, if $t$ is uniform, then at least transitivity holds. This provides the following characterization of primitivity. So, those groups are straightforward and the inverse synchronization problem is completely solvable.

\subsubsection*{Semigroups with more Elements}

In this section, the case with at least two generators for $\Gr (S)$ is considered. For this, $S$ needs to be a semigroup generated by at least two generators. So, what groups do not synchronize $S$? This question is really hard to solve, since in order to generate $\Gr (S)$ various combinations of kernel classes to need to be considered, in general. Hence, we are not able to provide an answer to this question, but rather provide a discussion and examples.

First, it is interesting to note the type of graphs which are generated by this construction. It is obvious, this construction generates graphs which are hulls; so, the non-synchronizing groups we obtain are automorphism groups of hulls. What about non-hulls? We need to leave this question and focus on automorphism groups of hulls.

So, because solving the inverse synchronization problem is very hard, it is of special interest to see what automorphism groups actually occur (or rather their isomorphism types) and which ones are likely to occur. As mentioned earlier, there are good and bad choices to pick a set $M$ of generators for $S$. Here, a bad choice is where $\Gr (S)$ provides a group which does synchronize some elements of $M$, but not all. A good choice is where a non-trivial group $G$ is obtained such that $\langle G,M\rangle$ is not-synchronizing. It appears that the bigger or the more structure the group $G$ has the better. So, we focus on the description of size and structure, where size is described quantitatively and structure qualitatively.

So, assuming a good set of transformations is chosen, which is providing a nice automorphism group. How good can this group be or, equivalently, how good can the given choice of generators in $M$ be? Are there choices which lead to hulls admitting a big (or well structured) non-synchronizing automorphism groups, and how many generators are needed to generate the corresponding graphs? These three questions are going to be tackled in the subsequent discussion.

Consider the first question: In what follows, choosing the transformations randomly is a bad choice. The reason for this is found in Cameron's paper \cite{pjc13}. He shows that by picking two random transformations of degree $n$ the semigroup generated by these transformations is synchronizing most of the time. Hence, $\Gr(S)$ is the null graph and its automorphism group is the whole symmetric group. This is a trivial answer to the inverse synchronization problem (that is, we obtain $S_n$). The result of Cameron is as follows.

\begin{lemma}
 The probability that two random transformations on $n$ points generate a synchronizing semigroup is about $1-O(n^{-2})$.
\end{lemma}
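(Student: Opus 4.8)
The plan is to work with the \emph{pair automaton} (the square construction) rather than with the semigroup directly. Let $f,g$ be chosen uniformly and independently from $T_n$, and consider the action of $f$ and $g$ on the set of $\binom n2$ unordered pairs $\{a,b\}$ of distinct points, where $\{a,b\}\mapsto\{af,bf\}$ unless $af=bf$, in which case the pair is sent to a sink (the diagonal). I would recall the standard equivalence that $\langle f,g\rangle$ is synchronizing if and only if from every off-diagonal pair there is a directed path to the sink; equivalently, $\{a,b\}$ is an edge of $\Gr(\langle f,g\rangle)$ precisely when the orbit of $\{a,b\}$ under all words in $f,g$ never meets the diagonal. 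Thus $(f,g)$ fails to synchronize exactly when some non-empty set $T$ of off-diagonal pairs is closed under both $f$ and $g$ and avoids the diagonal; call such a $T$ a \emph{trap}. Passing to a minimal trap, I may assume $T$ is a terminal strongly connected component of the pair automaton, on which $f$ and $g$ restrict to maps $T\to T$.

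First I would isolate the dominant obstruction, namely traps consisting of a single pair. A singleton trap is a $2$-set $\{a,b\}$ that each of $f$ and $g$ maps bijectively onto itself; then every word keeps $\{a,b\}$ a $2$-set and never collapses it, so $(f,g)$ is non-synchronizing. For fixed $\{a,b\}$ the ordered pair $(af,bf)$ is uniform on the $n^2$ possibilities, of which exactly two, $(a,b)$ and $(b,a)$, leave the set invariant, so this occurs with probability $2/n^2$; independence of $g$ gives $4/n^4$. Summing over the $\binom n2\sim n^2/2$ pairs, the expected number of singleton traps is $\sim 2/n^2$, and since distinct pairs give essentially disjoint events, a second-moment (inclusion--exclusion) estimate shows $(f,g)$ is non-synchronizing with probability at least $(1-o(1))\,2/n^2$. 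This already pins the order of magnitude from below.

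For the matching upper bound I would run a first-moment (union) argument over all minimal traps. A trap on a point set $U$ with $|U|=u$ forces both $f$ and $g$ to map the pairs inside $U$ into $U$ without collapsing any trap-pair; counting the admissible restrictions $f|_U,g|_U$ and multiplying by the $\binom nu$ choices of $U$, the cost of such a configuration is governed by the number of points that must be ``captured'', and the contribution of a $u$-point trap is $O(u!/n^{u})$ --- so $u=2$ contributes $\Theta(n^{-2})$ while every $u\ge 3$ contributes $O(n^{-3})$. Summing the geometrically decaying tail over $u$ leaves the total probability of a trap at $O(n^{-2})$, whence $(f,g)$ synchronizes with probability $1-O(n^{-2})$.

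The main obstacle is the bookkeeping in the last step: to turn the heuristic ``cost $O(u!/n^u)$'' into a rigorous uniform bound I must account for traps on which $f$ and $g$ are \emph{not} individually bijective (for a terminal strongly connected component only the joint action need be onto), and I must control the number of distinct trap \emph{shapes} on a given $U$ so that the double sum over $u$ and over shapes still telescopes to $O(n^{-2})$ rather than acquiring an unbounded combinatorial factor. Restricting attention to minimal traps, which cannot be large relative to $u$, is precisely what keeps this sum convergent, and making that restriction quantitative is the technical heart of the argument.
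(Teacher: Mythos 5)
You should first be aware that the paper does not prove this lemma at all: it is quoted verbatim from Cameron's paper \cite{pjc13} (``The result of Cameron is as follows''), so there is no in-text argument to compare yours against, and your attempt has to be judged on its own terms. Your reformulation via the action on $2$-sets with a diagonal sink, and the observation that non-synchronization is equivalent to the existence of a non-empty closed set of off-diagonal pairs (a ``trap''), is correct and is the standard way to set this up. Your lower-bound computation is also correct: a $2$-set fixed setwise by both maps occurs with expected multiplicity $\binom{n}{2}\cdot 4/n^4\sim 2/n^2$, which shows the exponent $2$ cannot be improved. Note, however, that the statement as written only asserts that the non-synchronization probability is $O(n^{-2})$, so the lower bound is the half that is not needed.

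The genuine gap is in the upper bound, and you have in effect flagged it yourself. The claimed cost ``$O(u!/n^u)$ for a $u$-point trap'' is only justified when the trap consists of \emph{all} pairs on its support $U$, which forces $f|_U$ and $g|_U$ to be injections of $U$ into itself and gives $\binom{n}{u}\,(u!/n^u)^2\le u!/n^u$. For a general trap $T$ on $U$, closure only forces $f(U)\subseteq U$ and the non-collapse of the pairs actually in $T$, so the naive union bound over supports yields terms of size $\binom{n}{u}(u/n)^{2u}\approx (ue/n)^u$; these are not $O(n^{-3})$ for $u\ge 3$ and actually blow up as $u$ approaches $n$ (when $U$ is all of $\{1,\dots,n\}$ the constraint $f(U)\subseteq U$ is vacuous). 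One must also control the number of distinct trap shapes $T$ on a given $U$, which you do not bound. Rescuing the argument requires using the minimality/terminal-strongly-connected structure quantitatively --- e.g.\ showing that on a minimal trap the joint action forces near-injectivity, or handling large $u$ by an entirely different estimate --- and this is precisely the step you defer as ``the technical heart.'' Since that step is the entire content of the $O(n^{-2})$ upper bound, what you have is a correct plan with the decisive estimate missing, not a proof.
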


Clearly, this suggests that by picking a bigger set of random transformations the probability that the resulting semigroup is synchronizing increases. But what happens in one of the rare cases, if a non-synchronizing semigroup obtained? How good is a good (random) choice of generators? That is, how big or how structured is the group likely to be? Well, for instance the stabilizer of a point in $S_n$, that is $S_{n-1}$, occurs as an automorphism group. This group is the biggest possible non-trivial group which can occur, and a possible construction for its graph is given in the next example.

\begin{example}
 Let $2\leq k\leq n-1$ and $\Gamma$ be a graph on $n$ points given by the complete graph on $k$ points with $n-k$ extra vertices without edges. This graph has automorphism $S_{k}$ (which might permute the $n-k$ vertices in any possible way). Also, this graph is a hull, and a minimal generating set contains $k$ transformations $t_1,...,t_{k}$ where each transformation $t_i$ maps the $n-k$ points to the point $i$ and fixes the others.
\end{example}

The automorphism groups from the previous example are intransitive, but groups admitting a nicer structure can be found, as well. One example is the complement of the Hamming graph $H(2,n)$. From the previous section we know that its minimal generating set is of size two. Moreover, its automorphism group is the primitive group $S_n\wr S_2$ with permutation rank $3$. This group has a richer structure, but is smaller in terms of size ($|S_n\wr S_2|=(n!)^{2}$ compared to $|S_{n^{2}}|=(n^{2})!$ both on $n^{2}$ points). Another example is the complete multi-partite graph which has a transitive, but imprimitive automorphism group. Thus, the occurring groups vary fundamentally, and with the right choice of generators both large groups and groups with a rich structure can be obtained. This statement is underlined by Table \ref{tablewithisomorphismtypes}, where the isomorphism types of all occurring automorphism groups for very small $n$ is listed. As can be observed, many different structures occur.

\begin{table}[!t]
\begin{center}
 \begin{tabular}{lc|lc}\hline
  Vertices: $n=3$ & & $n=4$ & \\\hline
  \# Graphs & 4 & \# Graphs & 11 \\
  \# Hulls & 4 & \# Hulls & 10 \\\hline
  Groups & Occurrences &Groups & Occurrences \\\hline
    $C_2$ & 2 & $C_2$ & 2 \\
    $S_3$ & 2 & $C_2\times C_2$ & 2 \\
    &&$D_8$ & 2\\
    &&$S_3$ & 2\\
    &&$S_4$ & 2\\\hline
    Vertices: $n=5$ & & $n=6$ & \\\hline
  \# Graphs & 34 & \# Graphs & 156 \\
  \# Hulls & 27 & \# Hulls & 102 \\\hline
  Groups & Occurrences &Groups & Occurrences \\\hline
  $C_2$ & 5 & $\langle 1 \rangle$ & 3 \\
  $C_2\times C_2$ & 6 & $C_2$ & 22 \\
  $D_{12}$ & 6 & $C_2\times C_2$ & 21 \\
  $D_{8}$ & 4 & $C_2\times C_2\times C_2$ & 4 \\
  $S_3$ & 2 & $S_3$ & 4 \\
  $S_4$ & 2 & $D_8$ & 7 \\
  $S_5$ & 2 & $D_{12}$ & 17 \\
  &&$C_2\times D_8$ & 6\\
  &&$S_4$ & 2\\
  &&$S_3\times S_3$ & 2\\
  &&$C_2\times S_4$ & 8\\
  &&$(S_3\times S_3) \rtimes C_2$ & 2\\
  &&$S_5$ & 2\\
  &&$S_6$ & 2\\\hline
   Vertices: $n=7$ & & $n=7$ (continued) & \\\hline
  \# Graphs & 1044 & & \\
  \# Hulls & 539 & & \\\hline
  Groups & Occurrences &Groups & Occurrences \\\hline
 $\langle 1\rangle$ & 49 & $C_2\times D_8$ & 20 \\
 $C_2$ & 142 & $C_2\times S_4$ & 20 \\
 $D_8$ & 21 & $C_2\times S_5$ & 6 \\
 $S_3$ & 21 & $D_8\times S_3$ & 8 \\
 $S_4$ & 2 & $S_3\times S_3$ & 6 \\
 $S_5$ & 2 & $S_3\times S_4$ & 6 \\
 $S_6$ & 2 & $C_2\times C_2\times C_2$ & 29 \\
 $S_7$ & 2 & $C_2\times C_2 \times S_3$ & 18 \\
 $D_{12}$ & 47 & $(S_3\times S_3)\rtimes C_2$ & 4 \\
 $C_2\times C_2$ & 133 & & \\
 
 \end{tabular}

\end{center}

\caption{Distribution of isomorphism types of automorphism groups from small hulls.}\label{tablewithisomorphismtypes}
\end{table}

The third question is on the minimal number of generators for these hulls. Above it is mentioned that the more generators are picked randomly, the greater is the probability to obtain a synchronizing group, and thus, obtain the trivial answer. So, if the generators of $S$ would be picked randomly, the probability of getting a non-synchronizing group is decreasing each time an additional transformation is picked. Thus, how many need to be picked at most? Or equivalently, what are the sizes of the minimal generating sets of hulls on $n$ vertices? 

From the previous example, it can be observed that every number between $1$ and $n-1$ may occur, but it is unclear for bigger values. Our guess is that $n-1$ transformations are enough to generate any hull on $n$ vertices, and the data in Table \ref{tableofsizesofminimalgeneratingsets} supports this guess. This table contains the number of hulls having a minimal generating set of size $i$, for $i\in \mathbb{N}$, and as we see, the maximal size is $n-1$, indeed. We are missing a proof for this guess, but we conjecture the following.

\begin{conjecture}\label{chap8conjecture}
 A graph on $n$ vertices which is a hull can be generated with at most $n-1$ transformations.
\end{conjecture}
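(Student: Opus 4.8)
The plan is to recast the statement as a bound on a product (Prague) dimension and then to induct. By the generating-set results of this section, a set $M$ of transformations satisfies $\Gr(M)=X$ precisely when each $f\in M$ is an endomorphism of $X$ (its kernel classes are independent sets, so no edge is collapsed) and every non-edge of $X$ is collapsed by some $f\in M$; by Theorem~\ref{theoremongeneratingsubsemigroup} we may take the $f$ to be idempotents of minimal rank, that is, proper $r$-colourings of $X$ with $r=\chi(X)=\omega(X)$. Conversely every proper $r$-colouring is realizable as such an endomorphism via $X\to K_r\to X$, because $\omega(X)=r$ supplies a clique $K_r\subseteq X$ to embed while a proper $r$-colouring is exactly a homomorphism $X\to K_r$. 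Hence \emph{generating the hull $X$ with $t$ transformations is equivalent to choosing $t$ proper $r$-colourings so that every non-edge is monochromatic in at least one of them}. Recording the colours of a vertex $v$ as a vector in $\{1,\dots,r\}^{t}$, adjacent vertices then disagree in every coordinate and non-adjacent vertices agree in some coordinate; so the minimal generating number equals the $r$-bounded product dimension of $X$ (the least $t$ with $X$ an induced subgraph of the categorical power $K_r^{\times t}$), and the conjecture asserts this dimension is at most $n-1$.

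This bound, if correct, is sharp: for $X=K_{n-1}$ together with one isolated vertex, in each coordinate the isolated vertex can agree with exactly one of the $n-1$ clique vertices, so $n-1$ coordinates are forced; this is precisely the extremal family already appearing above. To prove the upper bound I would induct on $n$ by \emph{identifying} a collapsible non-edge rather than deleting a vertex, since the dimension is badly behaved under deletion (the isolated-vertex example jumps from $0$ to $n-1$). Fix a non-edge $\{u,v\}$ and a proper $r$-colouring $\theta_0$ with $\theta_0(u)=\theta_0(v)$, which exists because $X$ is a hull, and form the quotient $X'=X/(u\equiv v)$. Since the quotient map $X\to X'$ and the factorisation $X'\to K_r\to X$ through $\theta_0$ are homomorphisms, $\chi(X')=\chi(X)=r$; and as the merged non-adjacent pair lies in no $r$-clique, $X'$ still contains $K_r$, whence $\omega(X')=r$ as well. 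Thus $X'$ is an $(n-1)$-vertex graph with $\chi=\omega=r$, and every proper $r$-colouring of $X'$ lifts to one of $X$ in which $u$ and $v$ receive a common colour.

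The induction closes cleanly when $u,v$ can be chosen as \emph{false twins} (non-adjacent with equal neighbourhoods): the identification then creates no new adjacencies, the lift of a covering of $X'$ covers $X$ with the same number of colourings, and every lifted colouring is automatically monochromatic on $\{u,v\}$, so (writing $g(\cdot)$ for the minimal generating number) $g(X)\le g(X')\le n-2$. Hence any hull with a non-adjacent twin reduces at once, and what remains is the twin-free case, which already contains the extremal graph. Here identifying a generic non-edge exposes \emph{split} non-edges $\{u,x\}$ with $v\sim x$ (and symmetrically), which the lift fails to cover and of which there may a priori be many. The obvious repair, covering the non-edges at a single vertex $w$ by the $\chi\big(X[\overline{N}(w)]\big)$ colourings whose $w$-class runs through a colouring of $w$'s non-neighbourhood, yields the recursion $g(X)\le g(X-w)+\chi\big(X[\overline{N}(w)]\big)$, which overshoots $n-1$ precisely because it counts the same colouring twice.

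The main obstacle is exactly this reuse: one must show that the $\le n-1$ colourings can be organised so that each simultaneously covers non-edges at the identified vertex and inside the remainder. I expect two sub-problems to be genuinely hard: (i) an amortised argument on the partition lattice guaranteeing that, in the twin-free case, a single well-chosen proper $r$-colouring always enlarges the covered graph enough that the running total of colourings never exceeds $n-1$; and (ii) a palette-reduction step, since if one instead invokes the known upper bound for the \emph{unrestricted} product dimension (of Lovász--Nešetřil--Pultr type, allowing arbitrary palettes), one must still show that for a hull the palette can be brought down to exactly $r$ without increasing the number of coordinates, i.e. that the chosen colour classes can be realised inside genuine proper $r$-colourings. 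I anticipate (ii), the interaction between the palette constraint and the amortisation for twin-free hulls, to be the decisive difficulty.
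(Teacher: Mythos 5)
First, note that the paper itself offers no proof of this statement: it is stated as Conjecture~\ref{chap8conjecture} and explicitly flagged as open (``We are missing a proof for this guess''), supported only by the computational data of Table~\ref{tableofsizesofminimalgeneratingsets} for $n\le 7$. So there is no argument in the paper to compare yours against, and a complete proof would be a new contribution rather than a reconstruction.

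Your proposal, however, is not such a proof; by your own account it is a reformulation plus a plan with two unresolved sub-problems. The reformulation is sound and consistent with the machinery of Section~\ref{section5}: by Theorem~\ref{theoremongeneratingsubsemigroup} and Corollary~\ref{corminimalidealandidempotents}, a hull $X$ with $\chi(X)=\omega(X)=r$ is generated exactly by a family of proper $r$-colourings (realised as endomorphisms via $X\to K_r\to X$) such that every non-edge is monochromatic in at least one of them, and your extremal example ($K_{n-1}$ plus an isolated vertex) is the $k=n-1$ case of the example already given in Section~\ref{section6}, so the bound, if true, is sharp. The genuine gaps are the ones you name, and they are fatal to the argument as it stands. (i) The false-twin reduction only disposes of a special case, and the quotient $X'=X/(u\equiv v)$ of a hull need not again be a hull: a non-edge $\{[u],x\}$ of $X'$ is covered only if some proper $r$-colouring of $X$ makes $u$, $v$ and $x$ simultaneously monochromatic, which the hull property of $X$ does not supply; so the induction hypothesis does not apply to $X'$, and the ``split non-edges'' you mention are precisely the uncovered ones. (ii) The fallback recursion $g(X)\le g(X-w)+\chi\big(X[\overline{N}(w)]\big)$ admittedly overshoots $n-1$, and the amortisation and palette-reduction needed to repair it are stated as hopes, not proved. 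A smaller caveat: identifying the minimal generating number with an induced embedding into a categorical power of $K_r$ additionally requires the colour-vector map to be injective, which your covering condition does not force. In short, the translation into an $r$-palette product-dimension bound is a correct and potentially useful restatement of the conjecture, but the statement remains unproved after your argument, exactly as it does in the paper.
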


To conclude this section, we summarize the previous discussion. It was shown that by moving from semigroups generated by a single transformation to semigroups with more generators no satisfactory answer to the inverse synchronization problem was given. The difficulties lie in the vast number of possible outcomes of graphs $\Gr (S)$. For instance, from Table \ref{tableofsizesofminimalgeneratingsets} it can be observed that for $n=7$ a total of $112=15+97$ different graphs can be generated from only two transformations.

Moreover, it was pointed out that the occurring non-synchronizing automorphisms groups can be both well structured and big. Also, it is shown that there exist graphs on $n$ vertices which cannot be constructed with less than $i$ transformations, for any $i=1,...,n-1$, which leads to Conjecture \ref{chap8conjecture}.

\begin{table}
 \begin{center}
  \begin{tabular}{lc|lc}
   Vertices: $n=4$ & & $n=5$ & \\\hline
  \# Graphs & 11 & \# Graphs & 34 \\
  \# Hulls & 10 & \# Hulls & 27 \\\hline
  Size & Occurrences &Size & Occurrences \\\hline
    $1$ & 6 & $1$ & 7 \\
    $2$ & 2 & $2$ & 12 \\
    $3$ & 1 & $3$ & 7\\
    &&$4$ & 1\\\hline
   Vertices: $n=6$ & & $n=7$ & \\\hline
  \# Graphs & 156 & \# Graphs & 1044 \\
  \# Hulls & 102 & \# Hulls & 539 \\\hline
  Size & Occurrences &Size & Occurrences \\\hline
    $1$ & 11 & $1$ & 15 \\
    $2$ & 35 & $2$ & 97 \\
    $3$ & 46 & $3$ & 316\\
    $4$ & 9  & $4$ & 100\\
    $5$ & 1  & $5$ & 10\\
    &        & $6$ & 1\\\hline
  \end{tabular}
 \end{center}
\caption{Distribution of sizes of minimal generating sets of small hulls.}\label{tableofsizesofminimalgeneratingsets}
\end{table}

\section{Problems}\label{section7}

Another set of questions might come from the following problem.

\begin{problem}
 Let $X$ be a graph with $\Hull(X)=L_2(n)$. Is it true or false that $X=L_2(n)$?
\end{problem}

Considering the same question with $\overline{L_2(n)}$, the graph $X$ does not need to be $\overline{L_2(n)}$, but it could be the cartesian product of two odd cycles ($n$ respectively) (see Example \ref{examplenoncompletecore}).

\begin{problem}
 Find more examples of hulls and in particular of non-hulls.
\end{problem}

\begin{problem}
 Find minimal generating sets for
 \begin{enumerate}
  \item the union of complete graphs $n.K_r$, for $r>2$.
  \item for the triangular graph $T(n)$.
  \item other examples of hulls, in general.
 \end{enumerate}
\end{problem}

\begin{problem}
 Find a better approach to the inverse synchronization problem.
\end{problem}

\section*{References}

\end{document}